\numberwithin{equation}{section}
\newtheorem{thm}{Theorem}[section]
\newtheorem{cor}[thm]{Corollary}
\newtheorem{lem}[thm]{Lemma}
\newtheorem{prop}[thm]{Proposition}
\newtheorem{defn}[thm]{Definition}
\theoremstyle{definition}
\newtheorem{rem}[thm]{Remark}
\newcommand{\ep}{\epsilon}
\newcommand{\cc}{\mathbb{C}}
\newcommand{\z}{\mathbb{Z}}
\newcommand{\Ind}{\mathrm{Ind}}
\newcommand{\Vir}{\mathrm{Vir}}
\newcommand{\SVir}{\mathrm{SVir}}
\newcommand{\p}{\mathcal{P}}
\newcommand{\q}{\mathcal{Q}}
\begin{document}

\title[]{Whittaker modules  for the super-Virasoro algebras}

\author[Liu]{Dong Liu}
\address{Department of Mathematics, Huzhou University, Zhejiang Huzhou, 313000, China}
\email{liudong@zjhu.edu.cn}

\author[Pei]{Yufeng Pei}
\address{Department of Mathematics, Shanghai Normal University,
Shanghai, 200234, China} \email{pei@shnu.edu.cn}

\author[Xia]{Limeng Xia}

\address{Institute of Applied System Analysis, Jiangsu University, Jiangsu Zhenjiang, 212013,
China}\email{xialimeng@ujs.edu.cn}

\thanks{Mathematics Subject Classification: 17B65; 17B68;17B70}

\maketitle

\begin{abstract}
In this paper, we define and study Whittaker modules for the super-Viraoro algebras, including the Neveu-Schwarz algebra and the Ramond algebra. We classify the simple Whittaker modules and obtain necessary and sufficient conditions for  irreducibility of these  modules.


\end{abstract}

\section{Introduction}

Whittaker vectors and Whittaker modules play a critical role in the representation theory of finite-dimensional simple Lie algebras (cf.~\cite{AP,K}). Recently  Whittaker modules have been intensively studied for many infinite dimensional Lie algebras such as the Virasoro algebra \cite{LGZ,FJK,OW1,OW2},  Heisenberg algebras \cite{Ch}, and  affine Kac-Moody algebras \cite{ALZ}. Analogous results in similar settings have been worked out for many Lie algebras with triangular decompositions (cf.~\cite{CSZ, LWZ,W,TWX,ZTL} and references therein). A general categorial framework for Whittaker modules was proposed in \cite{BM,MZ}. Degenerate Whittaker vectors of the Virasoro algebra naturally appear in  the AGT conjecture  in physics \cite{G}. An explicit formula for degenerate Whittaker vectors  has been obtained in terms of the Jack symmetric functions in \cite{Y}.

Whittaker modules and Whittaker categories  have been generalized to  finite-dimensional
simple Lie superalgebras  based on some representations of nilpotent finite-dimensional Lie superalgebras in \cite{BCW}.  In this paper, we  define and study  Whittaker  modules for the super-Viraoro algebras, which are  Lie superalgebras and known in literature under the name the N=1 superconformal algebras \cite{BMRW,NS,Ra}. It is known that they can be viewed as certain supersymmetric extensions of the Virasoro algebra and arise as the covariant constraints in the classical formulation of the RNS model. Representations for  the super-Viraoro algebras  have been extensively investigated (cf.~\cite{IK1,IK2,KV,KW,Su}).
We  classify all finite-dimensional simple modules over certain subalgebra over the super-Virasoro algebra. Furthermore  we classify the simple Whittaker modules and obtain necessary and sufficient conditions for  irreducibility of these modules.
It is worth remarking that  degenerate Whittaker vectors of the super-Virasoro algebras  have been investigated in \cite{DLM}.

The aforementioned results demonstrate that the Whittaker modules defined in present paper
satisfy some properties that their non-super analogues do. However, there are several
differences and some features that are new in the super case. It has been observed in \cite{BCW}, simple finite-dimensional modules for a finite-dimensional nilpotent Lie superalgebra are not always one-dimensional \cite{Se}.  This leads to
an additional challenge for generalizing Lie algebra results in the Lie superalgebra setting. In our settings,
finite-dimensional simple modules over the positive parts of the super-Virasoro algebras are proved to be two-dimensional, in contrast to the situation for the Virasoro algebra. For this reason, our definition of Whittaker modules is closely related to certain smaller subalgebras.

The paper is arranged as follows. In Section 2, we recall some
notations and collect known facts about the super-Viraoro algebras. In Section 3, we classify all
finite-dimensional simple modules over certain subalgebra of the super-Viraoro algebras.
In Section 4,  we classify all simple Whittaker modules and obtain necessary and sufficient conditions for  irreducibility of these modules.

Throughout this paper, we shall use $\cc,\cc^* , {\mathbb N}$, $\z_+$ and $\z$ to denote the sets of complex numbers, nonzero complex numbers, non-negative integers, positive integers and integers respectively.
For convenience,  all elements in superalgebras and modules are homogenous unless specified.

\section{Preliminaries} \label{sec:preliminaries}
In this section, we introduce the notation and conventions that will
be used throughout the paper.

Let $V = V_{\bar 0}\oplus V_{\bar 1}$ be any $\z_2$-graded vector space. Then any element $u\in V_{\bar 0}$ (${\rm resp. }$
$u\in V_{\bar 1}$) is said to be even (${\rm resp.}$ odd). We define $|u|=0$ if $u$ is even and $|u|=1$ if $u$ is odd. Elements in $V_{\bar 0}$ or $V_{\bar 1}$ are called homogeneous. For convenience all elements in superalgebras and modules are homogenous unless specified throughout this paper.

Let $\mathfrak{g}$ be a  Lie superalgebra, a $\mathfrak{g}$-module is a $\z_2$-graded vector space $V$ together with a bilinear map  $\mathfrak{g}\times V\to V$, denoted $(x,v)\mapsto xv$ such that
$$
x(yv)-(-1)^{|x||y|}y(xv)=[x,y]v
$$ and  $$\mathfrak g_{\bar i} V_{\bar j}\subseteq V_{\bar i+\bar j}$$
for all $x,y\in \mathfrak{g}, v\in V$. Thus there is a parity-change functor $\Pi$ on the category of $\mathfrak{g}$-modules, which interchanges
the $\z_2$-grading of a module. We use $U(\mathfrak{g})$ to denote the universal enveloping algebra.

All modules for Lie superalgebras considered in this paper  are $\z_2$-graded and all simple modules are nontrivial.

\begin{defn}\label{Defi-SVA}
The {\bf super-Virasoro algebras} are the Lie superalgebras
$$
\SVir_{\ep}=\bigoplus_{n\in\z}\cc L_m\oplus\bigoplus_{r\in\z+\ep} \cc G_r\oplus\cc  C
$$
which satisfies the following commutation relations:
\begin{align*}
[L_m, L_n] &= (m-n)L_{m+n} + \delta_{m,-n} \frac{m^3-m}{12} C, \\
[L_m, G_r] &= \left({m\over2}-r\right)G_{m+r},\\
[G_r, G_s] &= 2L_{r+s}+\frac{1}{3}\delta_{r+s,0}\left(r^2-\frac{1}{4}\right)C,\\
[\SVir_\ep,C] &= 0,
\end{align*}
for all $m, n\in\z,\ep=\frac{1}{2},0$, $r, s\in\z+\ep$. $\SVir_{0}$ is called the {\bf Ramond algebra} and $\SVir_{\frac{1}{2}}$ is called
the {\bf Neveu-Schwarz algebra}.
\end{defn}

By definition, we have the following decompositions:
$$
\SVir_\ep=\SVir_\ep^{\bar0}\oplus\SVir_\ep^{\bar1},
$$
where
$$
\SVir_\ep^{\bar0}=\bigoplus_{n\in\z}\cc L_m\oplus\cc  C,\quad \SVir_\ep^{\bar1}=\bigoplus_{r\in\z+\ep} \cc G_r.
$$
It is clear  that $\SVir_\ep^{\bar 0}$ is isomorphic to the well-known Virasoro algebra $\Vir$.

The Neveu-Schwarz algebra $\SVir_\ep$ has a $(1-\ep)\z$-grading by the eigenvalues of the adjoint action of $L_0$.
It follows that $\SVir_\ep$ possesses the following triangular decomposition:
$$
\SVir_\ep=\SVir_\ep^+\oplus \SVir_\ep^0\oplus \SVir_\ep^-
$$
where
$$
\SVir_\ep^\pm=\bigoplus_{n\in\mathbb{N}}\cc L_{\pm n}\oplus \bigoplus_{r\in\mathbb{N}+\ep}\cc G_{\pm r},\quad \SVir_\ep^0=\cc L_0\oplus\cc \delta_{\ep,0}G_0\oplus\cc C.
$$

 Set
\begin{eqnarray*}
\mathfrak{p}_\ep=\bigoplus_{n\geq 1}\cc L_n\oplus \bigoplus_{n\geq2}\cc G_{n-\ep}.
\end{eqnarray*}
It is clear that $\mathfrak{p}_\ep=\mathfrak{p}_\ep^{\bar0}\oplus\mathfrak{p}_\ep^{\bar1}$ is a subalgebra of $\SVir^+$, where
 $$
 \mathfrak{p}_\ep^{\bar0}=\bigoplus_{n\geq1}\cc L_n,\quad \mathfrak{p}_\ep^{\bar1}=\bigoplus_{n\geq 2}\cc G_{n-\ep}.
 $$

\begin{defn}\label{defi-w}For $c\in \cc$, let $\psi:\mathfrak{p}_\ep\to\cc$ be a Lie superalgebra homomorphism. A $\SVir_\ep$-module $M_\ep$ is called a {\bf Whittaker module} of type $(\psi,c)$ if
\begin{itemize}
\item[(i)] $M_\ep$ is generated by a homogeneous vector $w$;

\item[(ii)]$xw=\psi(x)w$ for any $x\in\mathfrak{p}_\ep$;

\item[(iii)]$Cw=cw$,

\end{itemize}
 where $w$ is called a {\bf Whittaker vector}  of $M$.
\end{defn}

Let  $\psi:\mathfrak{p}_\ep\to\cc$ be a Lie superalgebra homomorphism. Let $\cc w_\psi$ be one-dimensional $\mathfrak{p}_\ep$-module with $xw_\psi=\psi(x)w_\psi$ for any $x\in\mathfrak{p}_\ep$ and $Cw_\psi=cw_\psi$ for some $c\in\cc$. Define induced module
$$
W_\ep(\psi,c)=:\Ind_{\mathfrak{p}_\ep\oplus\cc C}^{\SVir_\ep}\cc w_{\psi}= U(\SVir_\ep) \otimes_{U(\mathfrak{p}_\ep\oplus\cc C)}\cc w_{\psi}.
$$
Then $W_\ep(\psi,c)$ is a Whittaker module of level $c$ for $\SVir_\ep$.

Clearly $W_\ep(\psi,c)$ contains a unique maximal submodule, and its irreducible quotient is denoted by $L_\ep(\psi,c)$.

\begin{rem}
When $\psi$ is trivial ($\psi=0$), the Whittaker module $W_\ep(0, c)$  and any $h\in\cc$, $L_0w-hw$ is a Whittaker vector. Moreover $W_\ep(0, c)/<L_0-hw>$ is the standard Verma module over the super Virasoro algebra $\SVir_\ep$, which were studied in \cite{IK1}, \cite{IK2}, etc..
\end{rem}

We define a {\it pseudopartition} $\lambda$ to be a non-decreasing
sequence of non-negative integers
\begin{equation} \label{eqn:pseudopart1}
\lambda=( 0 \leq \lambda_1 \leq \lambda_2 \leq \cdots \leq
\lambda_m).
\end{equation}
Denote by $\p$ the set of pseudopartitions. Similarly, we define a {\it strict pseudopartition} $\lambda$ to be a strict increasing
sequence of non-negative integers
\begin{equation} \label{eqn:pseudopart1}
\lambda=( 0 \leq \lambda_1 < \lambda_2 < \cdots <
\lambda_m).
\end{equation}
Denote by $\q$ the set of strict pseudopartitions. We also introduce an alternative notation for pseudopartitions. For
$\lambda \in \p$ or $\q$, write
\begin{equation}\label{eqn:pseudopart2}
\lambda = (0^{\lambda(0)}, 1^{\lambda(1)}, 2^{\lambda(2)}, \ldots),
\end{equation}
where $\lambda(k)$ is the number of times $k$ appearing in the
pseudopartition and $\lambda(k)=0$ for $k$ sufficiently large. Clearly if $\mu \in \q$, then $\mu(k)\in\{0, 1\}$.

For $\lambda\in\p,\mu\in \q$, define elements $L_{-\lambda},
G_{\pm\ep-\mu} \in U(\SVir_\ep)$ by
\begin{eqnarray*}
L_{-\lambda} &=& L_{-\lambda_s} \cdots L_{-\lambda_2}  L_{-\lambda_1}= \cdots L_{-1}^{\lambda(1)} L_0^{\lambda(0)},\\
G_{\pm\ep-\mu} &=& G_{\pm\ep-\mu_r}\cdots G_{\pm\ep-\mu_2} G_{\pm\ep-\mu_1} = \cdots G_{\pm\ep-2}^{\mu(2)} G_{\pm\ep-1}^{\mu(1)}G_{\pm\ep}^{\mu(0)}.
\end{eqnarray*}

Define
\begin{eqnarray*}
&&|\lambda |= \lambda_1 +  \lambda_2 + \cdots + \lambda_s=\sum_{i\ge0}i\lambda(i) \quad \mbox{(the size of $\lambda$)},\\
&&|\mu\pm\ep|=(\mu_1\pm\ep)+(\mu_2\pm\ep)+\cdots+(\mu_r\pm\ep).
\end{eqnarray*}

Define $\underline{0}= (0^0, 1^0, 2^0, \ldots)$, and write
$L_{\underline{0}}=1 \in U(\SVir_\ep)$. We will consider $\underline{0}$ to
be an element of $\p$. For any $\lambda\in \p$ and $\mu\in\q$, $L_{-\lambda}G_{\pm\ep-\mu}  \in
U(\SVir_\ep)_{-|\lambda|-|\mu\mp\ep|}$, where $U(\SVir_\ep)_{-|\lambda|-|\mu\mp\ep|}$ is the
$-|\lambda|-|\mu\mp\ep|$-weight space of $U(\SVir_\ep)$ under the adjoint action.
In particular, if $\lambda \in \p, \mu\in\q$, then $L_{-
\lambda}G_{\pm\ep-\mu} \in U(\SVir_\ep^-)_{-|\lambda|-|\mu\mp\ep|}$.

\section{Simple modules over $\SVir_\ep^+$}

In this section we classify all finite dimensional simple modules over the subalgebras $\SVir_\ep^+$ and $\mathfrak{p}_\ep$.

\begin{lem}\label{lie-hom-1}
Let $\psi:\mathfrak{p}_\ep\to \cc$ be a Lie superalgebra homomorphism. Then
\begin{eqnarray*}
\psi(L_n)=0,\quad  \psi(G_r)=0
\end{eqnarray*}
for $n\in\z, n\geq3 $ and $ r\in \z+\ep, r\geq 2-\ep$
\end{lem}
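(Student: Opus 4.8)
The plan is to exploit two structural facts about $\psi$: that it is parity preserving, and that it annihilates every bracket.

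First I would treat the odd generators. Since $\psi$ is a homomorphism of Lie superalgebras whose target $\cc$ is concentrated in even degree, $\psi$ must be an even map, hence it annihilates the odd subspace $\mathfrak{p}_\ep^{\bar1}=\bigoplus_{n\geq 2}\cc G_{n-\ep}$. Equivalently, in the one-dimensional module $\cc w_\psi$ with $w_\psi$ homogeneous, any odd element $x$ sends $w_\psi$ into the opposite parity, which is the zero subspace, forcing $\psi(x)=0$. Either way this yields $\psi(G_{n-\ep})=0$ for every $n\geq 2$, that is, $\psi(G_r)=0$ for all $r\geq 2-\ep$.

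Next I would treat $L_n$ for $n\geq 3$. Because $\cc$ is abelian, $\psi$ vanishes on $[\mathfrak{p}_\ep,\mathfrak{p}_\ep]$, so it suffices to realize each such $L_n$ as a bracket inside $\mathfrak{p}_\ep$. From the relation $[L_1,L_{n-1}]=(2-n)L_n$, with $L_1,L_{n-1}\in\mathfrak{p}_\ep$ and $2-n\neq 0$ for $n\geq 3$, I get $L_n=\frac{1}{2-n}[L_1,L_{n-1}]\in[\mathfrak{p}_\ep,\mathfrak{p}_\ep]$, whence $\psi(L_n)=0$. The indices stay in range since $n-1\geq 1$, and no central term intervenes because $1+(n-1)=n\neq 0$.

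The one point that requires care, and the reason the statement is phrased the way it is, is the lowest odd generator $G_{2-\ep}$. The only brackets of $\mathfrak{p}_\ep$ producing odd elements are $[L_m,G_r]=(\tfrac{m}{2}-r)G_{m+r}$ with $m\geq 1$ and $r\geq 2-\ep$, so they yield $G_{m+r}$ only for $m+r\geq 3-\ep$; thus $G_{2-\ep}$ is never a commutator of elements of $\mathfrak{p}_\ep$, and its vanishing under $\psi$ genuinely comes from the parity constraint rather than from the derived-subalgebra argument. For the same reason $L_1$ and $L_2$ lie outside $[\mathfrak{p}_\ep,\mathfrak{p}_\ep]$, which is exactly why $\psi(L_1)$ and $\psi(L_2)$ are left unconstrained by the lemma. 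Beyond these observations, all that remains is the routine check that the relevant structure constants are nonzero.
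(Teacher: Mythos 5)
Your proof is correct and is exactly the elaboration of the paper's one-line argument (``it follows immediately from Definition~\ref{Defi-SVA}''): $\psi$ kills all brackets since $\cc$ is abelian, giving $\psi(L_n)=0$ for $n\geq 3$ via $L_n=\frac{1}{2-n}[L_1,L_{n-1}]$, and it kills the odd part by parity preservation. Your side remarks---that $G_{2-\ep}$ is not a bracket in $\mathfrak{p}_\ep$ and that $L_1,L_2\notin[\mathfrak{p}_\ep,\mathfrak{p}_\ep]$, which is why $\psi(L_1),\psi(L_2)$ remain free parameters---are accurate and consistent with how the paper uses these values later.
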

\begin{proof}
It follows immediately from Definition \ref{Defi-SVA}.
\end{proof}

\begin{lem}
Let $\psi:\mathfrak{p}_\ep\to \cc$ be a Lie superalgebra homomorphism and
 $A_\ep(\psi)=\cc w_{\psi}\oplus \cc u_{\psi}$ with
 be a two-dimensional vector space with
\begin{eqnarray*}
x w_\psi&=&\psi(x)w_\psi, \
G_{1-\ep} w_\psi=u_\psi,\quad \forall x\in\mathfrak{p}_\ep.
\end{eqnarray*}
Then
\begin{itemize}
\item[(1)]$A_\ep(\psi)$ is a $(1|1)$-dimensional $\SVir_\ep^+$-module.
\item[(2)]$A_{\frac{1}{2}}(\psi)$ is simple if and only if $\psi$ is non-trivial.
\item[(3)]$A_{0}(\psi)$ is simple if and only if $\psi(L_2)\neq0$.
\item[(4)]$A_{0}(\psi)$ with $\psi(L_2)=0$ and $\psi(L_1)\neq0$ has a nontrivial submodule spanned by the vector $u_{\psi}$.
\item[(5)]$A_{\ep}(0)$ has a trivial submodule spanned by the vector $u_{\psi}$.
\end{itemize}
\end{lem}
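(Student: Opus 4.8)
The plan is to realize $A_\ep(\psi)$ as an induced module and then read off its submodule structure from an explicit description of the action. First I would record the homogeneous vector space decomposition $\SVir_\ep^+ = \mathfrak{p}_\ep \oplus \cc G_{1-\ep}$: the even part of $\SVir_\ep^+$ coincides with $\mathfrak{p}_\ep^{\bar0}$, while the odd part of $\SVir_\ep^+$ is spanned by the $G_r$ with $r>0$, of which only $G_{1-\ep}$ is absent from $\mathfrak{p}_\ep^{\bar1}$. Since $\mathfrak{p}_\ep$ is a subalgebra and $\cc G_{1-\ep}$ is a homogeneous (odd) complement, the super PBW theorem shows that $U(\SVir_\ep^+)$ is free over $U(\mathfrak{p}_\ep)$ with basis the ordered monomials in $G_{1-\ep}$, namely $1$ and $G_{1-\ep}$ (the single odd generator squares back into $\mathfrak{p}_\ep$). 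Hence the relations defining $A_\ep(\psi)$ make it isomorphic to $\Ind_{\mathfrak{p}_\ep}^{\SVir_\ep^+}\cc w_\psi$, which is automatically a $\SVir_\ep^+$-module and is $(1|1)$-dimensional, spanned by $w_\psi$ and $u_\psi = G_{1-\ep}w_\psi$. This proves (1).

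Next I would compute the action of each generator on $u_\psi$ by moving $G_{1-\ep}$ past the generator and using $x w_\psi = \psi(x)w_\psi$. The key bracket identities are $G_{1-\ep}^2 = \frac{1}{2}[G_{1-\ep},G_{1-\ep}] = L_{2-2\ep}$ (the central term drops out since $2-2\ep\neq 0$), $[G_s,G_{1-\ep}] = 2L_{s+1-\ep}$ for $G_s\in\mathfrak{p}_\ep^{\bar1}$, and $[L_m,G_{1-\ep}] = (\frac{m}{2}-1+\ep)G_{m+1-\ep}$ with $G_{m+1-\ep}\in\mathfrak{p}_\ep^{\bar1}$. Feeding these through the superalgebra relation and applying Lemma \ref{lie-hom-1} (which forces $\psi(G_r)=0$ for $r\ge 2-\ep$ and $\psi(L_n)=0$ for $n\ge3$) gives $L_m u_\psi = \psi(L_m)u_\psi$ for all $m\ge 1$, then $G_s u_\psi = 2\psi(L_{s+1-\ep})w_\psi$ for $G_s\in\mathfrak{p}_\ep^{\bar1}$, and finally $G_{1-\ep}u_\psi = \psi(L_{2-2\ep})w_\psi$.

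With the action in hand the submodule analysis is short. Since submodules in the category of $\z_2$-graded modules are graded subspaces, the only candidates for a proper nonzero submodule are $\cc w_\psi$ and $\cc u_\psi$; but $G_{1-\ep}w_\psi = u_\psi\neq 0$ rules out $\cc w_\psi$. Thus $A_\ep(\psi)$ is simple if and only if $\cc u_\psi$ is not a submodule, and from the formulas above $\cc u_\psi$ is stable precisely when $\psi(L_{2-2\ep})=0$ and $\psi(L_{s+1-\ep})=0$ for every $G_s\in\mathfrak{p}_\ep^{\bar1}$. I would then evaluate these conditions case by case. For $\ep=\frac{1}{2}$ they read $\psi(L_1)=0$ together with $\psi(L_2)=0$ (the index $s=\frac{3}{2}$ is the only one not already killed by Lemma \ref{lie-hom-1}); since such a $\psi$ is determined by $\psi(L_1),\psi(L_2)$, this is equivalent to $\psi=0$, giving (2). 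For $\ep=0$ the conditions $\psi(L_{s+1})=0$ are automatic (as $s+1\ge 3$), so $\cc u_\psi$ is a submodule if and only if $\psi(L_2)=0$, giving (3). Finally, whenever $\cc u_\psi$ is a submodule, $L_m$ acts on it by $\psi(L_m)$: if $\psi(L_2)=0$ and $\psi(L_1)\neq 0$ (with $\ep=0$) this action is nonzero, so $\cc u_\psi$ is a nontrivial submodule, giving (4); and if $\psi=0$ then all of $\SVir_\ep^+$ annihilates $u_\psi$, so $\cc u_\psi$ is a trivial submodule, giving (5).

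The genuinely delicate points are bookkeeping rather than conceptual: getting the super PBW reduction right so that the induced module is exactly $(1|1)$-dimensional (the identity $G_{1-\ep}^2=L_{2-2\ep}$ is what prevents it from growing), and tracking carefully which scalars survive after Lemma \ref{lie-hom-1} annihilates the rest, namely $\psi(L_1)$ and $\psi(L_2)$ when $\ep=\frac{1}{2}$ versus $\psi(L_2)$ alone when $\ep=0$. This asymmetry between the two values of $\ep$ is precisely what produces the different simplicity criteria in (2) and (3).
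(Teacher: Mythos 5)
Your proposal is correct, and it matches the paper exactly in spirit: the paper's entire proof of this lemma is ``It is straightforward to verify,'' and your argument is precisely the direct verification being left to the reader. Your computations of the action on $u_\psi$ (in particular $G_{1-\ep}u_\psi=\psi(L_{2-2\ep})w_\psi$ and $G_s u_\psi = 2\psi(L_{s+1-\ep})w_\psi$ for $G_s\in\mathfrak{p}_\ep^{\bar 1}$, after Lemma \ref{lie-hom-1} kills the other scalars) check out and correctly account for the asymmetry between parts (2) and (3), with the PBW/induced-module framing serving as a clean way to justify that the $(1|1)$-dimensional action in part (1) is well defined.
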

\begin{proof}It is straightforward to verify.
\end{proof}


\begin{prop} \label{irr-finite}
Assume $V$ is a finite-dimensional simple module over $\SVir_\ep^+$. Then $V$ is isomorphic to $A_\ep(\psi)$ or its simple sub(or quotient)-module, up to parity-change, where  $\psi:\mathfrak{p}_\ep\to \cc$ is a non-trivial Lie superalgebra homomorphism.
\end{prop}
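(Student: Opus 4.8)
The plan is to reduce everything to producing a single homogeneous vector $w\in V$ on which the ideal $\mathfrak{p}_\ep$ acts through a character. Indeed $\mathfrak{p}_\ep$ is a codimension-one ideal of $\SVir_\ep^+$ with complement $\cc G_{1-\ep}$, and $G_{1-\ep}^2=\tfrac12[G_{1-\ep},G_{1-\ep}]=L_{2-2\ep}\in\mathfrak{p}_\ep$. Hence, once we have $w$ with $xw=\psi(x)w$ for all $x\in\mathfrak{p}_\ep$ (which forces $\psi$ to be a homomorphism $\mathfrak{p}_\ep\to\cc$, as in Lemma \ref{lie-hom-1}), a PBW argument shows $U(\SVir_\ep^+)w=\cc w+\cc G_{1-\ep}w$: the actions close up because $[x,G_{1-\ep}]\in\mathfrak{p}_\ep$ acts on $w$ by a scalar and $G_{1-\ep}^2 w=\psi(L_{2-2\ep})w$. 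By simplicity $V=\cc w+\cc G_{1-\ep}w$, so $\dim V\le 2$, and the assignment $w_\psi\mapsto w,\ u_\psi\mapsto G_{1-\ep}w$ defines a surjection $A_\ep(\psi)\twoheadrightarrow V$; thus $V$ is $A_\ep(\psi)$ or, when the latter is reducible, one of its simple sub/quotients (as described in the preceding lemma), up to applying $\Pi$ according to the parity of $w$.

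The hard part is producing $w$, and the main obstacle is that the even part $\bigoplus_{n\ge1}\cc L_n$ is not solvable as an abstract Lie algebra, so Lie's theorem does not apply to it directly. I would instead show that its \emph{image} in $\mathfrak{gl}(V)$ is solvable. Writing $X_k=\rho(L_k)$ and using $[X_a,X_b]=(a-b)X_{a+b}$, I claim $\operatorname{tr}(X_mX_n)=0$ whenever $\max(m,n)\ge 3$: if $a\ge 3$ then $X_a=\tfrac{1}{2-a}[X_1,X_{a-1}]$, and trace invariance $\operatorname{tr}(A[B,C])=\operatorname{tr}([A,B]C)$ together with cyclicity reduces $\operatorname{tr}(X_aX_b)$ to a multiple of $\operatorname{tr}(X_1X_{a+b-1})$, which vanishes because $\operatorname{tr}(X_1[X_1,X_{a+b-2}])=0$. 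Since $[\bigoplus_{n\ge1}\cc L_n,\bigoplus_{n\ge1}\cc L_n]=\bigoplus_{n\ge3}\cc L_n$, this says $\operatorname{tr}(xy)=0$ for all $x$ in the image and all $y$ in its derived algebra, so Cartan's solvability criterion applies. Lie's theorem then triangularizes the image, whence every $\rho(L_n)$ with $n\ge 3$ is nilpotent.

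Next I would bring in the odd part. For $r\ge 2-\ep$ one has $\rho(G_r)^2=\rho(L_{2r})$ with $2r\ge 3$, so these operators are nilpotent as well; more generally every homogeneous element of the image of $I':=\bigoplus_{n\ge3}\cc L_n\oplus\bigoplus_{r\ge2-\ep}\cc G_r$ is nilpotent, since odd combinations square into the nilpotent derived algebra of the even image. One checks that $I'$ is an ideal of $\mathfrak{p}_\ep$ with $\mathfrak{p}_\ep/I'=\cc L_1\oplus\cc L_2$ abelian. By the Engel theorem for Lie superalgebras the common kernel $V^{I'}=\{v:I'v=0\}$ is nonzero, and it is $\mathfrak{p}_\ep$-stable because $I'$ is an ideal; on it $\mathfrak{p}_\ep$ acts through the commuting operators $\rho(L_1),\rho(L_2)$, which have a common homogeneous eigenvector $w$. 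Setting $\psi(L_1),\psi(L_2)$ to the corresponding eigenvalues and $\psi=0$ elsewhere gives the desired $\mathfrak{p}_\ep$-character, consistent with Lemma \ref{lie-hom-1}.

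Finally I would record nontriviality: if $\psi=0$ then $\mathfrak{p}_\ep$ kills $w$ and $G_{1-\ep}^2w=\psi(L_{2-2\ep})w=0$, so $\cc G_{1-\ep}w$ is a trivial submodule, and simplicity would force $V$ to be trivial, which is excluded; hence $\psi$ is nontrivial, matching the statement. I expect the genuinely delicate step to be the solvability of the image of the even part via the trace identity and Cartan's criterion, where the non-solvable algebra $\bigoplus_{n\ge1}\cc L_n$ is tamed by the finite-dimensionality of $V$, whereas the super-Engel reduction and the final two-dimensional bookkeeping are routine.
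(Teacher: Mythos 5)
Your proposal is correct, but it takes a genuinely different route from the paper's. The paper restricts $V$ to the even part $\Vir^+$ and quotes Proposition 12 of \cite{MZ} to obtain a one-dimensional simple $\Vir^+$-submodule $\cc w_\psi$; it then analyzes the vectors $u_i=G_{i+\frac12}w_\psi$ by hand (three claims, the key one being that applying $L_{2i_1+1}$ to a minimal-length linear relation among the $u_i$ forces that relation to have length one), concluding $G_rw_\psi=0$ for $r\ge 2-\ep$ and hence $V\cong A_\ep(\psi)$. You instead manufacture the $\mathfrak{p}_\ep$-eigenvector self-containedly: the identity $\mathrm{tr}(X_mX_n)=0$ for $\max(m,n)\ge3$ (which I checked — the reduction via $X_a=\frac{1}{2-a}[X_1,X_{a-1}]$ and invariance of the trace form is sound) plus Cartan's criterion makes the image of $\bigoplus_{n\ge1}\cc L_n$ solvable despite the abstract algebra not being so; Lie's theorem then makes all $\rho(L_n)$, $n\ge3$, simultaneously strictly triangular, the relation $\rho(G_r)^2=\rho(L_{2r})$ transfers nilpotency to the odd part, and the super analogue of Engel's theorem applied to the ideal $I'\subset\mathfrak{p}_\ep$ gives $V^{I'}\neq 0$, on which only the abelian quotient $\cc L_1\oplus\cc L_2$ survives. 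Your endgame, identifying $A_\ep(\psi)$ with $\Ind_{\mathfrak{p}_\ep}^{\SVir_\ep^+}\cc w_\psi$ via the PBW decomposition $U(\SVir_\ep^+)=U(\mathfrak{p}_\ep)\oplus U(\mathfrak{p}_\ep)G_{1-\ep}$, is cleaner than the paper's and makes the ``simple sub or quotient'' clause transparent. What your route buys: independence from \cite{MZ} (indeed it reproves Proposition \ref{irr-finite-1} along the way), uniform treatment of $\ep=0$ and $\ep=\frac12$ where the paper ``omits the details'', and the simultaneous killing of all $L_n$ ($n\ge3$) and $G_r$ ($r\ge 2-\ep$) in one stroke; the cost is heavier machinery (Cartan's criterion, Lie's theorem, super Engel — for the last you should supply a citation, e.g.\ Scheunert's book, since it is not in this paper's bibliography). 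Two small points to tighten: $\rho(L_1)$ and $\rho(L_2)$ commute only on $V^{I'}$, because $[L_1,L_2]=-L_3\in I'$ — which is exactly where you use them, so say so; and it is worth noting explicitly that $I'$ is an ideal of $\mathfrak{p}_\ep$ but \emph{not} of $\SVir_\ep^+$ (e.g.\ $[G_{1-\ep},G_{2-\ep}]=2L_{3-2\ep}\notin I'$ when $\ep=\frac12$), which is why your separate handling of $G_{1-\ep}$ through the induced-module step is not merely convenient but necessary.
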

\begin{proof} Suppose that $V$ is a finite-dimensional simple $\SVir_{\ep}^+$-module.  $V$ can be regarded as a finite-dimensional $\Vir^+$-module. Let $W$ be a simple $\Vir^+$-submodule of $V$. It follows from Proposition 12 in \cite{MZ} that $W$  has to be  one-dimensional.  Then  $W=\cc w_\psi$  and $L_nw_\psi=c_nw_\psi$ for all $n\in\z_+$, where $c_1, c_2\in\cc$ and $c_i=0$ for all $i\ge3$.

Next we always assume  that $\ep=\frac{1}{2}$ since the proof of $\ep=0$ case  can be managed by essentially the same way.

If  $G_{\frac12}w_\psi=0$, $V$ becomes a trivial $\SVir_{\frac{1}{2}}^+$-module. Assume that $G_{\frac12}w_\psi\ne0$. Let $U$ be a vector space spanned by
$
u_i=G_{i+\frac12}w_\psi
$
for $i\in \mathbb{N}$.

{\bf Claim 1.}\  $U$ is a finite-dimensional $\Vir^+$-module.

It follows from
$$
L_nu_i=\psi(L_n)u_{i}+(\frac{n}{2}-i-\frac{1}{2})u_{n+i}\in U,\quad n\in\z_+, i\in \mathbb{N}.
$$

{\bf Claim 2.}\  Let $U_i$ be the vector space spanned by $u_0,\cdots, u_i$. If there $m\in \mathbb{Z}_+$ such that $u_m\in U_{m-1}$, $u_{m+k}\in U_{m-1}$ for $k\in \z_+$.

In fact, if
\begin{equation}\label{eq1}
u_m=a_0u_0+a_1u_1+\cdots+a_{m-1}u_{m-1}
\end{equation}
for $a_i\in \cc$, for $k=1$, we get
$$
u_{m+1}=-\frac{1}{m}(\sum_{i=0}^{m-1} a_iL_1u_i)+\psi(L_1)u_m\in U_{m-1}.
$$
By induction on $k$, we  have $u_{m+k}\in U_{m-1}$ for $k\in\z_+$.

{\bf Claim 3.}\ There exists a positive integer $m\in\z_+$ such that $u_m=0$.

Since $U$ is finite-dimensional,  we can choose minimal $p\in\z_+$  such that
\begin{equation}\label{eqn=1}
a_{i_1}u_{i_1}+a_{i_2}u_{i_2}+\cdots+a_{i_p}u_{i_p}=0,\end{equation}
for  $a_{i_1}, a_{i_2}, \cdots, a_{i_p}\in\cc^*$, where $i_1<i_2<\cdots <i_p$. Then by action of $L_{2i_1+1}$ on both sides of (\ref{eqn=1}) we get
\begin{equation}\label{eqn=2}
(i_1-i_2)a_{i_2}u_{2i_1+1+i_2}+(i_1-i_3)a_{i_3}u_{2i_1+1+i_3}+\cdots+(i_1-i_p)a_{i_p}u_{2i_1+1+i_p}=0,
\end{equation}
 which implies $p=1$. It gets the Claim 3.

By Claim 3, there exists $n_0\in\z_+$ such that $G_{n_0+\frac12}w_\psi=0$. Furthermore we get $G_{n+\frac12}w_\psi=0$ for all $n\ge n_0$ by actions of $L_1$. Choose minimal $m\in\z_+$ such that $G_{m-\frac12}w_\psi\ne 0$ and $G_{n+\frac12}w_\psi=0$ for all $n\ge m$. In this case
if we set $w'=G_{m-\frac12}w_\psi$, then $G_{m-\frac12}w'=0$ and $G_{n+\frac12}w'=0$ for all $n\ge m$, which means $m=1$. Then
$$V=\cc w_\psi \oplus \cc u_\psi\cong A_{\frac{1}{2}}(\psi).$$
\end{proof}

\begin{prop} \label{irr-finite-1}
Any finite-dimensional simple module over $\mathfrak{p}_\ep$ is one-dimensional.
\end{prop}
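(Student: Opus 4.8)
The plan is to exploit the fact that $\mathfrak{p}_\ep$ sits inside $\SVir_\ep^+$ as a codimension-one ideal and to bootstrap from the classification already obtained in Proposition \ref{irr-finite}. First I would record the structural observation that $\SVir_\ep^+=\mathfrak{p}_\ep\oplus\cc G_{1-\ep}$ with $\mathfrak{p}_\ep$ an ideal: the brackets $[G_{1-\ep},L_n]=(1-\ep-\frac{n}{2})G_{n+1-\ep}$ and $\{G_{1-\ep},G_{j-\ep}\}=2L_{1+j-2\ep}$ land in $\mathfrak{p}_\ep$ for $n\geq 1$, $j\geq 2$, and $G_{1-\ep}^2=L_{2-2\ep}\in\mathfrak{p}_\ep$ (no central term occurs since every index is positive). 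Thus by the PBW theorem for Lie superalgebras, $U(\SVir_\ep^+)$ is free of rank two as a $U(\mathfrak{p}_\ep)$-module, with basis $\{1,G_{1-\ep}\}$.

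Given a finite-dimensional simple $\mathfrak{p}_\ep$-module $V$, I would form the induced module $\tilde V=\mathrm{Ind}_{\mathfrak{p}_\ep}^{\SVir_\ep^+}V=U(\SVir_\ep^+)\otimes_{U(\mathfrak{p}_\ep)}V$. By the rank-two freeness, $\tilde V\cong V\oplus G_{1-\ep}\otimes V$ as vector spaces, so $\dim\tilde V=2\dim V<\infty$. Being finite-dimensional and nonzero, $\tilde V$ admits a simple quotient $S$, which is a finite-dimensional simple $\SVir_\ep^+$-module, and Frobenius reciprocity $\mathrm{Hom}_{\SVir_\ep^+}(\tilde V,S)\cong\mathrm{Hom}_{\mathfrak{p}_\ep}(V,\mathrm{Res}\,S)$ produces a $\mathfrak{p}_\ep$-homomorphism $f\colon V\to\mathrm{Res}\,S$. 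Since $1\otimes V$ generates $\tilde V$ over $U(\SVir_\ep^+)$ and the quotient map is surjective onto $S\neq 0$, the image of $V$ in $S$ is nonzero, so $f\neq 0$; as $V$ is simple, $f$ is injective and realizes $V$ as a $\mathfrak{p}_\ep$-submodule of $\mathrm{Res}\,S$.

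The conclusion then follows from Proposition \ref{irr-finite}: up to parity change, $S$ is isomorphic either to a two-dimensional $A_\ep(\chi)$ or to a one-dimensional simple sub/quotient of such. When $\dim S=1$ there is nothing to check; when $S\cong A_\ep(\chi)$, its restriction to $\mathfrak{p}_\ep$ has the one-dimensional submodule $\cc w_\chi$ (on which $\mathfrak{p}_\ep$ acts by the scalars $\chi$) with one-dimensional quotient $\cc u_\chi$, so every $\mathfrak{p}_\ep$-composition factor of $\mathrm{Res}\,S$ is one-dimensional. A simple submodule of a module all of whose composition factors are one-dimensional is itself one-dimensional; hence $\dim V=1$.

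The main point requiring care, and the step I expect to be the crux, is the verification that $\mathfrak{p}_\ep$ is a codimension-one ideal together with the attendant rank-two freeness of $U(\SVir_\ep^+)$ over $U(\mathfrak{p}_\ep)$, since this is exactly what forces the induced module $\tilde V$ to remain finite-dimensional and thereby lets Proposition \ref{irr-finite} apply. As an alternative I could argue directly in the spirit of the proof of Proposition \ref{irr-finite}: choose a one-dimensional $\Vir^+$-submodule $\cc w$ via Proposition 12 of \cite{MZ}, set $u_i=G_{i-\ep}w$ for $i\geq 2$, and use $\{G_{i-\ep},G_{j-\ep}\}w=2L_{i+j-2\ep}w=0$ (valid since $i+j-2\ep\geq 3$) together with the action $L_nu_i=c_nu_i+(\frac{n}{2}-i+\ep)u_{n+i}$ to run the same minimal-dependence argument, concluding that all $u_i$ vanish so that $\cc w=V$. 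However, the bookkeeping needed to kill the lowest modes $u_2,u_3,\dots$ is more delicate than the endgame in Proposition \ref{irr-finite}, which is why I prefer the induction argument above.
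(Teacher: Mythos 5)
Your proposal is correct, but it takes a genuinely different route from the paper's own proof, which is essentially the ``alternative'' you sketch at the end: the paper picks a common eigenvector $w$ for $\mathfrak{p}_\ep^{\bar 0}\cong\Vir^+$ via Proposition 12 of \cite{MZ}, then reruns the minimal-dependence argument (Claims 1--3) of Proposition \ref{irr-finite} inside $\mathfrak{p}_\ep$ to get $G_{n-\ep}w=0$ for all $n\geq 2$, so $\cc w$ is a $\mathfrak{p}_\ep$-submodule and $V=\cc w$ by simplicity. The bookkeeping you feared there is in fact \emph{easier} than in Proposition \ref{irr-finite}: the endgame of that proof forces the minimal surviving mode to be $m=1$, i.e.\ $G_{1-\ep}w'\neq 0$, and since $G_{1-\ep}\notin\mathfrak{p}_\ep$ that possibility is unavailable, so all the $u_i$ vanish outright. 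Your route instead uses Proposition \ref{irr-finite} as a black box, and its key structural inputs check out: $\mathfrak{p}_\ep$ is indeed a codimension-one ideal of $\SVir_\ep^+$ with odd complement $\cc G_{1-\ep}$ (your bracket computations, including $G_{1-\ep}^2=L_{2-2\ep}$ with no central term, are right); super-PBW does make $U(\SVir_\ep^+)$ free of rank two over $U(\mathfrak{p}_\ep)$ with basis $\{1,G_{1-\ep}\}$, so the induced module stays finite-dimensional; the Frobenius-reciprocity embedding $V\hookrightarrow \mathrm{Res}\,S$ is sound; and the restriction of $A_\ep(\chi)$ to $\mathfrak{p}_\ep$ really has $\cc w_\chi$ as a one-dimensional submodule with one-dimensional quotient, so every composition factor is one-dimensional and $\dim V=1$ follows by Jordan--H\"older. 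Two small points worth making explicit: in the super setting one should take a maximal \emph{graded} submodule to produce $S$ (and allow odd morphisms, or a parity shift, in the Hom-spaces), and if the simple quotient $S$ happens to be the trivial one-dimensional module --- a case the ``nontrivial $\psi$'' phrasing of Proposition \ref{irr-finite} does not literally cover --- your conclusion holds anyway since $\dim S=1$. On balance, your argument costs more machinery (induction, Frobenius reciprocity, composition series) but isolates a reusable principle about codimension-one odd ideals, whereas the paper's direct argument is shorter and self-contained.
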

\begin{proof} For $\ep=\frac{1}{2}$, suppose that $V$ is a finite-dimensional simple $\mathfrak{p}_{\frac{1}{2}}$-module.
Then there exists $w\in V$ such that $L_nw=c_nw$ for all $n\in\z_+$, where $c_1, c_2\in\cc$ and $c_i=0$ for all $i\ge3$. With the same proof in Proposition \ref{irr-finite}, we have
$$
G_rw=0,\quad r\ge \frac32.
$$
Then $V=\cc w$.  Since the proof for  $\ep=0$ case is similar, we omit the details.
\end{proof}

\begin{cor}Let $V$ be finite-dimensional simple module over $\SVir_\ep^+$ with $C$ acts as a scalar $c\in \cc$.  Define induced $\SVir_\ep$-module
$$
M_\ep(V,c)=U(\SVir_\ep)\otimes_{U(\SVir_\ep^+\oplus\cc C)}V.
$$
Then there exists  a Lie superalgebra homomorphism $\psi:\mathfrak{p}_\ep\to\cc$ such that
$$
M(V,c)\cong W_\ep(\psi,c)
$$
as  modules over $\SVir_\ep$.
\end{cor}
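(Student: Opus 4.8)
The plan is to obtain the isomorphism from transitivity of induction along the chain $\mathfrak{p}_\ep \oplus \cc C \subseteq \SVir_\ep^+ \oplus \cc C \subseteq \SVir_\ep$. First I would invoke Proposition \ref{irr-finite}, according to which $V$ is, up to applying the parity functor $\Pi$, the two-dimensional module $A_\ep(\psi)$ for some non-trivial homomorphism $\psi\colon\mathfrak{p}_\ep\to\cc$; this is the $\psi$ that will appear in the conclusion, and the scalar $c$ is the common level of $V$, of $\cc w_\psi$, and of both induced modules. The structural fact that drives everything is that $\SVir_\ep^+ = \mathfrak{p}_\ep \oplus \cc G_{1-\ep}$ as vector spaces, so $\SVir_\ep^+$ exceeds $\mathfrak{p}_\ep$ by the single odd generator $G_{1-\ep}$; moreover $G_{1-\ep}^2 = \frac{1}{2}[G_{1-\ep}, G_{1-\ep}] = L_{2-2\ep}$ already lies in $\mathfrak{p}_\ep$, the central term of $[G_r,G_s]$ vanishing here because $2-2\ep \neq 0$.

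The heart of the argument is to identify the intermediate module $\widetilde A := \Ind_{\mathfrak{p}_\ep \oplus \cc C}^{\SVir_\ep^+ \oplus \cc C}\cc w_\psi$ with $A_\ep(\psi)$. Using the PBW theorem with $G_{1-\ep}$ placed last, and the fact that $G_{1-\ep}^2$ reduces into $U(\mathfrak{p}_\ep \oplus \cc C)$, I would show that $\{\,1\otimes w_\psi,\ G_{1-\ep}\otimes w_\psi\,\}$ is a basis, so that $\widetilde A$ is exactly two-dimensional. Writing $\tilde w = 1\otimes w_\psi$ and $\tilde u = G_{1-\ep}\otimes w_\psi$, the relations $x\tilde w = \psi(x)\tilde w$ for $x \in \mathfrak{p}_\ep$ and $G_{1-\ep}\tilde w = \tilde u$ hold by construction, while the remaining action is forced: $G_{1-\ep}\tilde u = L_{2-2\ep}\tilde w = \psi(L_{2-2\ep})\tilde w$, and for $n \geq 1$ one computes $L_n\tilde u = (\frac{n}{2}-1+\ep)\,\psi(G_{n+1-\ep})\tilde w + \psi(L_n)\tilde u = \psi(L_n)\tilde u$, the first term vanishing by Lemma \ref{lie-hom-1}. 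These are exactly the defining relations of $A_\ep(\psi)$, whence $\widetilde A \cong A_\ep(\psi) \cong V$ as $(\SVir_\ep^+ \oplus \cc C)$-modules.

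Granting this identification, transitivity of induction completes the proof:
$$W_\ep(\psi, c) = \Ind_{\mathfrak{p}_\ep \oplus \cc C}^{\SVir_\ep}\cc w_\psi \cong \Ind_{\SVir_\ep^+ \oplus \cc C}^{\SVir_\ep}\widetilde A \cong \Ind_{\SVir_\ep^+ \oplus \cc C}^{\SVir_\ep}V = M_\ep(V, c),$$
the first isomorphism being the associativity of the tensor product over $U(\mathfrak{p}_\ep \oplus \cc C)\subseteq U(\SVir_\ep^+ \oplus \cc C)\subseteq U(\SVir_\ep)$ and the second being the content of the previous paragraph. I expect the only genuine obstacle to lie in the two-dimensionality claim: one must use the super PBW theorem to be certain that inducing the one-dimensional $\mathfrak{p}_\ep$-module up to $\SVir_\ep^+$ produces nothing beyond the two vectors $1\otimes w_\psi$ and $G_{1-\ep}\otimes w_\psi$, which is precisely where it matters that the square of the new odd generator is absorbed into $U(\mathfrak{p}_\ep \oplus \cc C)$ and acts by the scalar $\psi(L_{2-2\ep})$. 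The transitivity step itself is formal.
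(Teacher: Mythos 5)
Your proposal is correct, and since the paper states this corollary without any proof, yours is evidently the intended argument: identify $V$ with $A_\ep(\psi)$ via Proposition \ref{irr-finite}, note that $\SVir_\ep^+=\mathfrak{p}_\ep\oplus\cc G_{1-\ep}$ with $G_{1-\ep}^2=\frac{1}{2}[G_{1-\ep},G_{1-\ep}]=L_{2-2\ep}\in\mathfrak{p}_\ep$ (the central term vanishing as $2-2\ep\neq0$), so that the super PBW theorem makes $U(\SVir_\ep^+)$ a free right $U(\mathfrak{p}_\ep)$-module with basis $\{1,G_{1-\ep}\}$ and the intermediate induced module is the two-dimensional $A_\ep(\psi)$, and finish by transitivity of induction; your verification that $L_n\tilde u=\psi(L_n)\tilde u$ via $\psi(G_{n+1-\ep})=0$ from Lemma \ref{lie-hom-1} is exactly right. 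One caveat, which you inherit from the statement itself rather than introduce: Proposition \ref{irr-finite} also permits $V$ to be a one-dimensional simple sub- or quotient module of $A_0(\psi)$ in the degenerate Ramond case $\psi(L_2)=0$, $\psi(L_1)\neq0$, and for such $V$ exactness of induction gives $M_0(V,c)\cong W_0(\psi,c)/\langle G_1w_\psi\rangle=L_0(\psi,c)$ (see Remark \ref{rem45}), a proper simple quotient that is not isomorphic to any $W_0(\psi',c)$; likewise $V\cong\Pi A_\ep(\psi)$ yields $\Pi W_\ep(\psi,c)$. So your opening reduction ``$V\cong A_\ep(\psi)$ up to parity'' silently discards precisely the cases where the corollary as literally stated breaks down, which is a defect of the corollary's formulation, not of your argument in the generic case.
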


\section{Whittaker modules}
In this section,   we classify all Whittaker vectors of $W_\ep(\psi,c)$ and obtain a criterion of simplicity of $W_\ep(\psi,c)$.

\begin{lem}
$\{L_{- \lambda}G_{\ep-\mu}w_{\psi}\mid \lambda\in\p, \mu\in\q\}$ forms a basis of the Whittaker module $W_{\ep}(\psi,c)$.
\end{lem}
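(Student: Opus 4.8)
The assertion is the Poincar\'e--Birkhoff--Witt (PBW) basis for an induced module, so the plan is to pin down the correct vector-space complement of the inducing subalgebra $\mathfrak{p}_\ep\oplus\cc C$ and then quote the PBW theorem for Lie superalgebras. There is no deep inequality or estimate to prove; the content is structural, and the one genuinely ``super'' point is the appearance of \emph{strict} pseudopartitions.

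First I would record the vector-space decomposition $\SVir_\ep=\mathfrak{a}_\ep\oplus(\mathfrak{p}_\ep\oplus\cc C)$, where $\mathfrak{a}_\ep$ is the span of all modes lying outside the inducing subalgebra: the even generators $L_m$ with $m\le 0$ together with the odd generators $G_r$ with $r\le 1-\ep$. Since $\mathfrak{p}_\ep\oplus\cc C$ is a subalgebra, the super-PBW theorem makes the multiplication map $U(\mathfrak{a}_\ep)\otimes U(\mathfrak{p}_\ep\oplus\cc C)\to U(\SVir_\ep)$ a linear isomorphism, so $U(\SVir_\ep)$ is free as a right $U(\mathfrak{p}_\ep\oplus\cc C)$-module on any PBW basis of $U(\mathfrak{a}_\ep)$. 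Passing to $W_\ep(\psi,c)=U(\SVir_\ep)\otimes_{U(\mathfrak{p}_\ep\oplus\cc C)}\cc w_\psi$ then identifies it, as a vector space, with $U(\mathfrak{a}_\ep)w_\psi$, on which $U(\mathfrak{a}_\ep)$ acts freely of rank one.

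It then remains to transcribe a PBW basis of $U(\mathfrak{a}_\ep)$ into the notation of Section~\ref{sec:preliminaries}. Fixing the order $\cdots L_{-1}L_0$ on the even modes and $\cdots G_{\ep-1}G_{\ep}$ on the odd ones, each ordered monomial is recorded by a pseudopartition $\lambda\in\p$ for the $L$-part and a \emph{strict} pseudopartition $\mu\in\q$ for the $G$-part. For $\ep=\tfrac12$ the odd modes of $\mathfrak{a}_\ep$ are exactly the $G_{\ep-k}$, $k\ge0$, so these monomials are precisely the $L_{-\lambda}G_{\ep-\mu}$ and the statement follows, the Ramond case proceeding along the same lines. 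The strictness of $\mu$ is the feature to justify: since the $G_r$ are odd, each occurs to at most the first power in the enveloping superalgebra (a second power is reabsorbed via $G_r^2=\tfrac12[G_r,G_r]\in U(\SVir_\ep^{\bar0})$), which is exactly the condition $\mu(k)\in\{0,1\}$ defining $\q$.

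For the spanning half one can also argue by hand, bypassing the abstract isomorphism: applying the relations of Definition~\ref{Defi-SVA} to an arbitrary monomial acting on $w_\psi$, I would move every factor from $\mathfrak{p}_\ep$ and every $C$ to the right, where they act by the scalars $\psi(\cdot)$ and $c$, and reorder the remaining factors of $\mathfrak{a}_\ep$ into the fixed order, the commutator corrections lowering the filtration degree; an induction on degree then writes any vector as a combination of the $L_{-\lambda}G_{\ep-\mu}w_\psi$. The only real content is linear independence, which the relations alone do not deliver and which I would obtain precisely from the freeness in the super-PBW theorem. Accordingly I expect the delicate points to be bookkeeping rather than computation: correctly locating the complement $\mathfrak{a}_\ep$ (in particular noticing that the zero modes $L_0$, and $G_0$ when $\ep=0$, lie outside $\mathfrak{p}_\ep$ and hence enter the basis through $\lambda(0)$ and $\mu(0)$), and invoking the super-PBW theorem with its first-power constraint on the odd generators.
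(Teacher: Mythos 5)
Your proposal is correct and is essentially the paper's own proof: the paper disposes of this lemma in one line (``It is a consequence of the PBW theorem''), and your write-up is exactly that argument made explicit --- the decomposition $\SVir_\ep=\mathfrak{a}_\ep\oplus(\mathfrak{p}_\ep\oplus\cc C)$, freeness of $U(\SVir_\ep)$ as a right $U(\mathfrak{p}_\ep\oplus\cc C)$-module, and the strictness of $\mu$ coming from $G_r^2=\tfrac12[G_r,G_r]$. One bookkeeping caveat (inherited from the paper's own notation rather than a flaw in your method): for $\ep=0$ your correctly identified complement contains $G_1$ (odd modes $r\le 1-\ep$), so the PBW monomials in the Ramond case are $L_{-\lambda}G_{1-\mu}w_\psi$ with $\mu(0)$ recording $G_1$, not the literal $L_{-\lambda}G_{\ep-\mu}w_\psi=L_{-\lambda}G_{-\mu}w_\psi$ of the statement --- your ``proceeding along the same lines'' and the parenthetical about $G_0$ entering through $\mu(0)$ gloss over this index shift, which is consistent with the paper's later use of $G_1w_\psi$ as a genuine basis-level vector in $W_0(\psi,c)$.
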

\begin{proof}
It is a consequence of the PBW theorem.
\end{proof}

\begin{lem}\label{deri} Let $w_\psi\in W_{\ep}(\psi,c)$ be the Whittaker vector.
For $v=uw_\psi\in W_{\ep}(\psi,c)$, we have
$$
(x-\psi(x))v=[x, u]w_\psi,\quad\forall x\in\mathfrak{p}_\ep.
$$
\end{lem}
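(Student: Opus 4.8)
The plan is to reduce the identity to the single relation governing how $x$ moves past $u$ inside the enveloping algebra, namely the super-commutator relation in $U(\SVir_\ep)$. Since both $v \mapsto (x-\psi(x))v$ and $v \mapsto [x,u]w_{\psi}$ are additive in $u$, and since every $v \in W_\ep(\psi,c)$ has the form $uw_{\psi}$ for some $u \in U(\SVir_\ep)$ (because $w_{\psi}$ generates the module), it suffices by bilinearity to treat a homogeneous $x \in \mathfrak{p}_\ep$ and a homogeneous $u \in U(\SVir_\ep)$. For such elements the defining relation of the universal enveloping superalgebra gives
\[
xu = (-1)^{|x||u|}\, ux + [x,u],
\]
where $[x,u] = xu - (-1)^{|x||u|}ux$ is the super-commutator, equivalently the value at $u$ of the super-derivation $\mathrm{ad}_x$ of $U(\SVir_\ep)$ extending the bracket of $\SVir_\ep$.

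Next I would apply both sides to the Whittaker vector and use property (ii) of Definition \ref{defi-w}, that is $xw_{\psi} = \psi(x)w_{\psi}$ for $x \in \mathfrak{p}_\ep$. Since $\psi(x)$ is a scalar it passes freely through $u$, yielding
\[
xuw_{\psi} = [x,u]w_{\psi} + (-1)^{|x||u|}\psi(x)\, uw_{\psi} = [x,u]w_{\psi} + (-1)^{|x||u|}\psi(x)\, v.
\]
Subtracting $\psi(x)v$ from both sides reduces the lemma to showing that $\big((-1)^{|x||u|}-1\big)\psi(x)\,v = 0$.

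The one point requiring care, and the only place where the super-structure genuinely intervenes, is the Koszul sign $(-1)^{|x||u|}$. When $x$ is even this sign equals $1$ and the term vanishes identically. When $x$ is odd the sign may be $-1$, but then $\psi(x)=0$: this is immediate from Lemma \ref{lie-hom-1}, and also follows conceptually from the fact that $\psi$ is an even map into $\cc$, regarded as a purely even abelian superalgebra, so $\psi$ annihilates the odd part $\mathfrak{p}_\ep^{\bar1}$. In either case $\big((-1)^{|x||u|}-1\big)\psi(x)=0$, giving $(x-\psi(x))v = [x,u]w_{\psi}$ and completing the proof. Thus the essentially only obstacle is bookkeeping this sign, and it is defused precisely by the evenness of the character $\psi$.
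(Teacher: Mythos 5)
Your proof is correct and follows essentially the same route as the paper's: expand the super-commutator $[x,u]=xu-(-1)^{|x||u|}ux$, apply it to $w_{\psi}$, use $xw_{\psi}=\psi(x)w_{\psi}$, and kill the Koszul-sign correction term via $\psi$ vanishing on the odd part (Lemma \ref{lie-hom-1}). In fact you are more careful than the paper, which compresses the sign bookkeeping into a ``$\pm$'' in a one-line computation; your explicit treatment of the case $(-1)^{|x||u|}=-1$ makes precise exactly why that shortcut is harmless.
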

\begin{proof} For $x\in\mathfrak{p}_\ep$, by Lemma \ref{lie-hom-1} we have
$$
[x,u]w_\psi=xuw_\psi\pm uxw_\psi=xuw_\psi\pm\psi(x)uw_\psi=(x-\psi(x))uw_\psi=(x-\psi(x))v.
$$
\end{proof}

For
$$
v=\sum_{(\lambda,\mu)\in \p\times\q}p_{\lambda,\mu}L_{- \lambda}G_{\ep-\mu}w_{\psi}\in W_{\ep}(\psi,c),
$$
we define
$$
{\rm maxdeg}(v)=\max\{|\lambda|+|\mu-\ep|\mid p_{\lambda,\mu}\neq0\}
$$

\begin{lem}\label{lemm-weight-1} For any
$(\lambda, \mu)\in \p\times\q,$ $k\in \mathbb{Z}_+,$ we have

\begin{itemize}
\item[(i)] {\rm maxdeg}$([G_{k+2-\ep},
L_{-\lambda}G_{\ep-\mu}]w_{\psi})\leq |\lambda|+|\mu-\ep|-k+\ep;$
\item[(ii)] If $\mu(i)=0$ for all $0\leq
i\leq k$, then
$${\rm maxdeg}([G_{k+2-\ep},
L_{-\lambda}G_{\ep-\mu}]w_{\psi}) \leq|\lambda|+|\mu-\ep|-k-1+\ep;$$
\item[(iii)] If $\mu(i)=0$ for all $i\leq k$ while $\mu(k)=1$, then
$$[G_{k+2-\ep},
L_{-\lambda}G_{\ep-\mu}]w_{\psi}
=v+2\mu(k)\psi(L_{2})
L_{-\lambda}G_{\ep-\mu'}w_{\psi},
$$
$$
{\rm maxdeg}(v)\le|\lambda+ \mu|-k-1+\ep,
$$
and
$\mu{'}$ satisfies that $\mu'(i)=\mu(i)$ for all $i\neq k$ and $\mu'(k)=0$ and then
$$
{\rm maxdeg}(L_{-\lambda}G_{\frac12-\mu'}w_{\psi})=|\lambda|+|\mu-\ep|-k+\ep.
$$
\end{itemize}

\end{lem}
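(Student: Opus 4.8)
The plan is to turn each bracket into a module action and then control it with the degree filtration coming from ${\rm maxdeg}$. Since $k\ge 1$ we have $G_{k+2-\ep}\in\mathfrak{p}_\ep$ with $\psi(G_{k+2-\ep})=0$ by Lemma \ref{lie-hom-1}, so Lemma \ref{deri} gives $[G_{k+2-\ep},L_{-\lambda}G_{\ep-\mu}]w_\psi=G_{k+2-\ep}\bigl(L_{-\lambda}G_{\ep-\mu}w_\psi\bigr)$. Writing $L_{-\lambda}G_{\ep-\mu}=X_1\cdots X_N$ as a product of generators and applying the super-Leibniz rule, I would push $G_{k+2-\ep}$ to the right; the term in which it reaches $w_\psi$ untouched vanishes, leaving $\sum_i\pm X_1\cdots X_{i-1}[G_{k+2-\ep},X_i]X_{i+1}\cdots X_N w_\psi$. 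The structural point driving the whole argument is that each bracket $[G_{k+2-\ep},X_i]$ is again a single generator (up to a central scalar), so throughout the ensuing PBW reordering there is always exactly one ``travelling'' generator out of position, every other factor being an original one of mode $\le\ep$, hence never absorbed into a scalar.

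Next I would record the formal degree $\deg=-(\text{total }L_0\text{-mode})$, which is additive, invariant under brackets, and equal to $|\lambda|+|\mu-\ep|$ on a basis monomial; bracketing with $G_{k+2-\ep}$ lowers it by $k+2-\ep$. When the travelling generator finally meets $w_\psi$ it is either absorbed into a scalar $\psi(\cdot)$, if it lies in $\mathfrak{p}_\ep$, or survives as a basis factor. By Lemma \ref{lie-hom-1} the only absorptions with nonzero scalar are $L_1$ and $L_2$, and absorbing $L_j$ raises the degree of the surviving word by $j$. As there is at most one absorption per monomial, the degree can increase by at most $2$, so ${\rm maxdeg}\le(|\lambda|+|\mu-\ep|)-(k+2-\ep)+2=|\lambda|+|\mu-\ep|-k+\ep$, which is (i).

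For (ii) I would track, along each reduction branch, the parity and the mode of the travelling generator. Its parity flips between even ($L$-type) and odd ($G$-type) exactly when it brackets an odd factor $G_{\ep-j'}$, while its mode is $k+2-\ep$ plus the sum of the modes of the factors it has consumed. Being absorbed as $L_2$ forces it to end even, hence to consume an odd number of odd factors, and the mode equation then forces the consumed modes to sum to $\ep-k$. A short counting estimate shows that at least one consumed odd factor $G_{\ep-j'}$ must have $j'\le k$. Under the hypothesis $\mu(i)=0$ for all $i\le k$ no such factor is present, so an $L_2$ can never be absorbed; only an $L_1$-absorption remains possible, improving the bound to $|\lambda|+|\mu-\ep|-k-1+\ep$, which is (ii).

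Under the hypothesis of (iii) the unique odd factor of index $\le k$ is the lowest one $G_{\ep-k}$, so the same estimate shows the only branch that can reach $L_2$ is the direct bracket $[G_{k+2-\ep},G_{\ep-k}]=2L_2$ (the central term is absent since $k+2\ne0$); this meets $w_\psi$ as $2\psi(L_2)$ and produces the asserted top term $2\mu(k)\psi(L_2)L_{-\lambda}G_{\ep-\mu'}w_\psi$ of degree $|\lambda|+|\mu-\ep|-k+\ep$, with the rest gathered into $v$. I expect the main obstacle to be exactly this final bookkeeping: isolating that single top-degree contribution and controlling the remainder. The delicate point is the weight-zero factors $L_0$, since $[G_{k+2-\ep},L_0]$ is again a nonzero multiple of $G_{k+2-\ep}$, so the travelling generator can slide across them without changing its mode; carefully accounting for these contributions, using the degree-invariance of the bracket together with the single-absorption principle, is where the real care is needed.
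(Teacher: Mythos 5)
Your arguments for (i) and (ii) are correct, and considerably more careful than the paper's own proof, which merely splits the bracket by the super-Leibniz rule into $[G_{k+2-\ep},L_{-\lambda}]G_{\ep-\mu}w_{\psi}\pm L_{-\lambda}[G_{k+2-\ep},G_{\ep-\mu}]w_{\psi}$ and observes that the only degree-raising event comes from the absorption $[G_{k+2-\ep},G_{\ep-k}]w_{\psi}=2\psi(L_2)w_{\psi}$ (together with possible $\psi(L_1)$ absorptions). Your travelling-generator expansion, the degree invariance of the bracket, and the mode-parity count showing that an $L_2$-absorption forces the consumption of some $G_{\ep-j'}$ with $j'\leq k$ are exactly what is needed to turn that terse observation into an actual proof of (i) and (ii).

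The gap is in (iii), at precisely the point you flag and then defer. Your mode equation does \emph{not} show that the direct bracket $[G_{k+2-\ep},G_{\ep-k}]=2L_2$ is the only branch reaching $L_2$: with a single consumed odd factor $j'=k$ it forces the consumed even modes to sum to zero, which permits any number of consumed $L_0$ factors, and since $[G_{k+2-\ep},L_0]=(k+2-\ep)G_{k+2-\ep}\neq0$ these branches genuinely occur. They matter because $L_0$'s contribute nothing to maxdeg: stripping $L_0$'s off $\lambda$ yields $\lambda''$ with $|\lambda''|=|\lambda|$, so the resulting terms $\psi(L_2)L_{-\lambda''}G_{\ep-\mu'}w_{\psi}$ sit at the \emph{same} top degree $|\lambda|+|\mu-\ep|-k+\ep$ as the asserted leading term, not in the remainder $v$. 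Concretely, for $\ep=\frac12$, $k=1$, $\lambda=(0)$, $\mu=(1)$ one computes $[G_{5/2},L_0G_{-1/2}]w_{\psi}=2\psi(L_2)L_0w_{\psi}+5\psi(L_2)w_{\psi}$, and both summands have maxdeg $0=|\lambda|+|\mu-\ep|-k+\ep$; hence when $\psi(L_2)\neq0$ and $\lambda(0)\geq1$ no decomposition with ${\rm maxdeg}(v)\leq|\lambda|+|\mu-\ep|-k-1+\ep$ exists at all. So your proof of (iii) cannot close as announced: part (iii) is literally correct only when $\lambda(0)=0$, and in general must be weakened to allow additional top-degree terms supported on monomials $L_{-\lambda''}G_{\ep-\mu'}w_{\psi}$ with $|\lambda''|=|\lambda|$ and $\lambda''(0)<\lambda(0)$ --- a weaker form that still suffices for Proposition \ref{whittaker-vector}, e.g.\ by choosing $(\lambda,\mu)\in\Lambda_N$ with $\mu(k)=1$ maximizing $\lambda(0)$, so that its leading coefficient cannot be cancelled. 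The paper's one-line derivation of (iii) (``combining (i) and (ii)'') overlooks the same branches, so your instinct about where the real care is needed was sound; but deferring that bookkeeping leaves your proof of (iii) incomplete, and carrying it out in fact changes the statement rather than merely confirming it.
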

\begin{proof} Assume that $\ep=\frac{1}{2}$.

To prove (i), we write $[G_{\frac12+k},
L_{-\lambda}G_{\frac12-\mu}]$ as a linear combination of the basis of $U(\SVir_{\frac{1}{2}}):$
\begin{eqnarray*}
[G_{\frac{3}{2}+k},
L_{-\lambda}G_{\frac12-\mu}]w_{\psi}=[G_{\frac32+k},
L_{-\lambda}]G_{\frac12-\mu}w_{\psi}\pm L_{-\lambda}[G_{\frac32+k},
G_{\frac12-\mu}]w_{\psi}.
\end{eqnarray*}
For the second summand, due to $[G_{\frac32+k}, G_{\frac12-k}]w_{\psi}=2\psi(L_2)w_{\psi}$, then the maxdeg of second summand
maybe become $|\lambda|+|\mu-\frac12|-k+\frac{1}{2}$. (ii) follows from the calculations as the second summand in (i). Combining with (i) and (ii), we get (iii).

Since the proof for $\ep=0$ case is similar, we omit the details.
\end{proof}

\begin{prop}\label{whittaker-vector} Suppose that $\psi$ is nontrivial and $w_\psi\in W_{\ep}(\psi,c)$ be the Whittaker vector.
\begin{itemize}
\item[(i)]If $\psi(L_2)\ne0$, then $w'\in W_{\frac{1}{2}}(\psi,c)$ is a Whittaker vector if and
only if $w'\in \mbox{span}_{\cc}\{w_\psi\}$.
\item[(ii)]If $\psi(L_2)=0$ and $\psi(L_1)\ne0$, then $w'\in W_{\frac{1}{2}}(\psi,c)$ is a Whittaker vector if and
only if $w'\in \mbox{span}_{\cc}\{w_\psi,G_{\frac12}w_\psi\}$.
\item[(iii)] $v\in  W_{0}(\psi,c)$ is a Whittaker vector if and
only if $w'\in \mbox{span}_{\cc}\{w_\psi,G_{1}w_\psi\}$.
\end{itemize}
 \end{prop}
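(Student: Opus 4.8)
The plan is to first reduce the Whittaker condition to finitely many equations and then run a downward induction on the $\mathrm{maxdeg}$-filtration, with the odd part governed by Lemma~\ref{lemm-weight-1}. Since $\psi$ is a homomorphism into the abelian $\cc$ and, by Lemma~\ref{lie-hom-1}, vanishes on every odd element of $\mathfrak{p}_\ep$, a short induction on bracket length (using that the only odd generator has $\psi=0$, so the ``both odd'' correction term drops out) shows that $v$ is a Whittaker vector if and only if $(x-\psi(x))v=0$ for $x$ in the generating set $\{L_1,L_2,G_{2-\ep}\}$ of $\mathfrak{p}_\ep$; equivalently $G_{n+2-\ep}v=0$ for all $n\ge0$, $L_nv=0$ for $n\ge3$, and $L_iv=\psi(L_i)v$ for $i=1,2$. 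Lemma~\ref{deri} gives the clean reformulation $(x-\psi(x))(uw_\psi)=[x,u]w_\psi$ that makes these equations tractable. The ``if'' direction of each part is then a direct check: $w_\psi$ is always a Whittaker vector, and for $G_{1-\ep}w_\psi$ the even conditions hold automatically while $G_{n+2-\ep}(G_{1-\ep}w_\psi)=2\psi(L_{n+3-2\ep})w_\psi$, which vanishes for all $n\ge0$ exactly when $\psi(L_2)=0$ in the Neveu--Schwarz case ($\ep=\tfrac12$), and unconditionally in the Ramond case ($\ep=0$) because there the lowest odd generator $G_2$ pairs into $L_3$, with $\psi(L_3)=0$ forced by Lemma~\ref{lie-hom-1}. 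This already accounts for the hypotheses in (ii) and their absence in (iii).

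For the ``only if'' direction I would expand $v=\sum p_{\lambda,\mu}L_{-\lambda}G_{\ep-\mu}w_\psi$ in the PBW basis and argue by downward induction on $N=\mathrm{maxdeg}(v)$. The engine is Lemma~\ref{lemm-weight-1}: applied to a top-degree basis vector with $|\lambda|+|\mu-\ep|=N$, the operator $G_{n+2-\ep}$ produces, at the highest surviving level $N-n+\ep$, the term $2\psi(L_2)\,L_{-\lambda}G_{\ep-\mu'}w_\psi$ precisely when the smallest part of $\mu$ equals $n$ (with $\mu'$ deleting that part), and nothing there otherwise. In case (i), where $\psi(L_2)\ne0$, I would take $n=k_0$ to be the smallest part occurring among the top-degree terms that carry a $G$, read off the level $N-k_0+\ep$ component of the identity $G_{k_0+2-\ep}v=0$, and conclude from $2\psi(L_2)\ne0$ and PBW linear independence that the corresponding coefficients vanish. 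Iterating over minimal parts and then over $N$ removes all $G$-content, leaving $v\in\mathrm{span}\{L_{-\lambda}w_\psi\}$; this lies in the Virasoro Whittaker module, so the conditions $L_nv=\psi(L_n)v$ together with the nontriviality of $\psi$ force $v\in\cc w_\psi$ by the known classification of Virasoro Whittaker vectors \cite{OW1,OW2,LGZ}.

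When $\psi(L_2)=0$ (cases (ii) and (iii)) the leading coupling $2\psi(L_2)$ is zero and the induction stalls at a single step. Here I would pass to the next-order contribution of $G_{n+2-\ep}$: the pairing $[G_{n+2-\ep},G_{\ep-(n+1)}]$ contributes $2L_1$, hence a coefficient $2\psi(L_1)$ at level $N-n-1+\ep$, and tracking this subleading term shows that the only odd direction it cannot eliminate is the lowest creation vector $G_{1-\ep}w_\psi$ (for $\ep=\tfrac12$ this is $G_{1/2}w_\psi$, of $\mathrm{maxdeg}=-\tfrac12$). Subtracting the appropriate multiple of $G_{1-\ep}w_\psi$ and rerunning the induction then yields $v\in\mathrm{span}\{w_\psi,\,G_{1-\ep}w_\psi\}$, giving (ii) for $\ep=\tfrac12$ and (iii) for $\ep=0$, the Ramond case again needing no hypothesis on $\psi$. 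The main obstacle is the combinatorial bookkeeping in this leading-term analysis: under $G_{n+2-\ep}$ the pure-$L$ terms $L_{-\lambda}w_\psi$ themselves spawn new single-$G$ terms at the top level, so the ``$G$-content'' and ``$L$-content'' parts of the leading equation are coupled and must be disentangled by organizing the expansion simultaneously by $\mathrm{maxdeg}$ and by the number $\ell(\mu)$ of odd factors. Making this double induction close, and verifying in the $\psi(L_2)=0$ regime that the subleading $\psi(L_1)$-coupling leaves exactly a one-dimensional surplus and not more, is the delicate part of the argument.
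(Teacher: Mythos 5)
Your proposal follows essentially the same route as the paper's own proof: expand $w'$ in the PBW basis, filter by $\mathrm{maxdeg}$, and use Lemma~\ref{lemm-weight-1} so that the odd operators strip the top-level $G$-content with leading coefficient $2\psi(L_2)$, falling back when $\psi(L_2)=0$ to the $2\psi(L_1)$-pairing (your ``subleading term'' of $G_{n+2-\ep}$ is exactly the paper's action by $G_{k+\frac12}$ re-indexed, since $[G_{n+\frac32},G_{-n-\frac12}]=2L_1$), which leaves precisely $w_\psi$ and possibly $G_{1-\ep}w_\psi$; the only real deviations are cosmetic --- you first reduce the Whittaker condition to the generators $\{L_1,L_2,G_{2-\ep}\}$, and you cite the known Virasoro classification for the residual pure-$L$ terms where the paper argues directly with $L_{l+2}$ (resp.\ $L_{l+1}$). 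One small mis-filing: you place case (iii) under the ``$\psi(L_2)=0$'' branch, whereas the Ramond statement carries no hypothesis --- when $\psi(L_2)\neq0$ the leading pairing does the stripping and $G_1w_\psi$ nevertheless survives because $G_1$ pairs only into $L_{\geq3}$, a point your own sufficiency check already contains (and the paper likewise dismisses the Ramond necessity as ``similar''), so this does not amount to a genuine gap.
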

\begin{proof} Let $w'=uw\in W_{\frac{1}{2}}(\psi,c)$ be a Whittaker vector, we can write $w'$ a linear combination
of the basis of $W_{\frac{1}{2}}(\psi,c)$:
$$w'=\sum_{(\lambda, \mu)\in\p\times\q}
p_{\lambda, \mu} L_{-\lambda}G_{\frac12-\mu}w,$$ where $p_{\lambda,
\mu}\in \cc.$ Set
$$N:={\rm max}\{|\lambda|+|\mu-\frac12|\,|\,
p_{\lambda, \mu}\neq0\},$$
$$\Lambda_N:=\{(\lambda, \mu)
\in\p\times\q\,|\, p_{\lambda, \mu}\neq0, |\lambda|+|\mu-\frac12|=N\}.$$

Let $\psi':\mathfrak{p}_{\frac{1}{2}}\to\cc$ be a Lie superalgebra homomorphism and $\psi'\ne \psi$. Then there exists
at least one of $\{L_1, L_2\},$ denoted by $E,$ such
that $\psi(E)\neq \psi'(E).$ Assume that $w'$ is a Whittaker vector
of $W_{\ep}(\psi', c)$. On the one hand,
$$Ew'=v+\sum_{(\lambda, \mu)\in\Lambda_{N}}
p_{\lambda, \mu} L_{-\lambda}G_{\frac12-\mu}\psi(E)w,$$ where
maxdeg$(v)<N.$ On the other hand, $Ew'=\psi'(E)w'$. Thus
$\psi'(E)=\psi(E),$ which is contrary to our assumption that
$\psi'(E)\neq\psi(E).$

 Next, for $w',$ we will show that if there is
$(\bar{0},\bar{0})\neq(\lambda, \mu)\in \p\times\q$ such that
$p_{\lambda, \mu}\neq 0,$ then there is $E_n\in\{L_n, G_{n+\frac12}\mid n\ge1\}$ such that
$(E_n-\psi(E_n))w'\neq0$. In this case $w'$ is not a Whittaker
vector, which proves the necessity.

 Assume that
$p_{\lambda, \mu}\neq 0$ for some $(\lambda, \mu)\neq
(\bar{0},\bar{0})$, then, by Lemma \ref{deri}, we have
$$
(E_n-\psi(E_n))w'=\sum_{\lambda, \mu} p_{\lambda, \mu} [E_n,
L_{-\lambda}G_{\frac12-\mu}]w.
$$

{\bf Case (i)}: $\psi(L_2)\ne0$.

Suppose that there exists $(\lambda, \mu)\in \Lambda_N$ with $\mu(n)=1$ for some $n\in\mathbb{N}$.
Now we can set $k:={\rm min}\{n\in \mathbb{N}|\,
\mu(n)=1\  {\rm \ for \  some }\ (\lambda,
\mu)\in \Lambda_N\}$, then
\begin{eqnarray*}
G_{k+\frac32}w'&=&\sum_{(\lambda, \mu)\notin\Lambda_N} p_{\lambda, \mu}
[G_{k+\frac32},
L_{-\lambda}G_{\frac12-\mu}]w\\
&&+\sum_{\begin{subarray}\ (\lambda, \mu)\in\Lambda_N\\
\ \ \mu(k)=0
\end{subarray}} p_{\lambda, \mu}
[G_{k+\frac32},
L_{-\lambda}G_{\frac12-\mu}]w\\
&&+\sum_{\begin{subarray}\ (\lambda, \mu)\in\Lambda_N\\
\ \ \mu(k)=1
\end{subarray}} p_{\lambda, \mu}
[G_{k+\frac32}, L_{-\lambda}G_{\frac12-\mu}]w.
\end{eqnarray*}
By using Lemma \ref{lemm-weight-1} (i) to the first summand, we know that the maxdeg
of it is strictly smaller than $N-k+\frac12$. As for the second summand,
note that $\mu(i)=0$ for $0\leq i\leq k,$ the maxdeg of $[G_{k+\frac32}, L_{-\lambda}G_{\frac12-\mu}]$
is also strictly smaller than $N-k$ by Lemma \ref{lemm-weight-1} (ii). Now using
Lemma \ref{lemm-weight-1} (iii) to the third summand, we know it has form:
$$v+2\sum_{\begin{subarray}
\ \ (\lambda, \mu)\in\Lambda_N\\
\ \ \mu(k)=1
\end{subarray}}\mu(k)\psi(L_{2})p_{\lambda, \mu}
L_{-\lambda}G_{\frac12-\mu'}w,$$ where maxdeg$(v)\leq N-k-\frac12$, and ${\mu'}$ satisfies
$\mu'(k)=0$, $\mu'(i)=\mu(i)$ for all
$i>k$. So the maxdeg of the third summand is $N-k+\frac12$. This implies that
$G_{k+\frac32}w'\neq0$.

Now we can suppose that $\mu(i)=0$ for all $i\ge0$ if $(\lambda,\mu)\in \Lambda_N$. Then we can set
$l:={\rm min}\{n\in \mathbb{N}\,|\,
\lambda(n)\neq0\  {\rm \ for \  some }\ (\lambda,
\mu)\in \Lambda_N\}$.  Then
\begin{eqnarray*}
(L_{l+2}-\psi(L_{l+2}))w'&=&\sum_{(\lambda, \mu)\notin\Lambda_N} p_{\lambda, \mu}
[L_{l+2},
L_{-\lambda}G_{\frac12-\mu}]w\\
&&+\sum_{\begin{subarray}
\ (\lambda, \mu)\in\Lambda_N\\
\ \ \lambda(l)=0
\end{subarray}} p_{\lambda, \mu}
[L_{l+2},
L_{-\lambda}]w\\
&&+\sum_{\begin{subarray}
\ (\lambda, \mu)\in\Lambda_N\\
\ \ \lambda(l)\neq0
\end{subarray}} p_{\lambda, \mu}
[L_{l+2}, L_{-\lambda}]w.
\end{eqnarray*}
Since the maxdeg's of the first summand and second summand are strictly smaller than $N-l$, and the maxdeg's of the third summand is $N-l$, we have $(L_{l+2}-\psi(L_{l+2}))w'\ne 0$ and get the proposition.

\noindent{\bf Case (ii).} $\psi(L_2)=0$ and $\psi(L_1)\ne0$.

In this case, we can also get $\mu(i)=0$ for all $i\ge1$ if $(\lambda,\mu)\in \Lambda_N$ by action of $G_{k+\frac12}$ as in Case I.

Set
$l:={\rm min}\{n\in \mathbb{N}\,|\,
\lambda(n)\neq0\  {\rm \ for \  some }\ (\lambda,
\mu)\in \Lambda_N\}$.  Then we have
\begin{eqnarray*}
(L_{l+1}-\psi(L_{l+1}))w'&=&\sum_{(\lambda, \mu)\notin\Lambda_N} p_{\lambda, \mu}
[L_{l+1},
L_{-\lambda}G_{\frac12-\mu}]w\\
&&+\sum_{\begin{subarray}
\ (\lambda, \mu)\in\Lambda_N\\
\ \ \lambda(l)=0
\end{subarray}} p_{\lambda, \mu}
[L_{l+1},
L_{-\lambda}]G_{\frac12}^iw\\
&&+\sum_{\begin{subarray}
\ (\lambda, \mu)\in\Lambda_N\\
\ \ \lambda(l)\neq0
\end{subarray}} p_{\lambda, \mu}
[L_{l+2}, L_{-\lambda}]G_{\frac12}^jw,
\end{eqnarray*} where $i,j=0, 1$.
Since the maxdeg's of the first summand and second summand are strictly smaller than $N-l$, and the maxdeg's of the third summand is $N-l$, we have $(L_{l+1}-\psi(L_{l+1}))w'\ne 0$.

Since the proof for $\ep=0$ case (iii) is similar, we omit the details.

\end{proof}

\begin{lem}\label{lem:predotActionFinite}
Let $\lambda \in \p$, $\mu\in\q$ and $E_n=L_n$ or $G_{n+1-\ep}$.
\begin{itemize}
\item[(i)] For all $n>1$, $E_n(L_{-\lambda}G_{\ep-\mu}w_{\psi}) \in
\mbox{span}_{\cc} \{ L_{-\lambda'}G_{\ep-\mu'} w_{\psi} \mid |
\mu'-\ep|+|\lambda'| + \lambda'(0) \leq |\lambda|+|\mu-\ep| + \lambda(0)\}$.
\item[(ii)]  If $n > |\lambda|+|\mu-\ep|+2$, then $E_n(L_{-\lambda}G_{\ep-\mu}w_{\psi}) = 0$.
\end{itemize}
\end{lem}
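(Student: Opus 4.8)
The plan is to reduce each action to a commutator, expand it by the (super-)Leibniz rule, and then control the outcome through the adjoint $L_0$-grading of $U(\SVir_\ep)$ together with Lemma \ref{lie-hom-1}. Write $w=w_\psi$ and set $d=|\lambda|+|\mu-\ep|$ and $D(\lambda,\mu)=|\lambda|+|\mu-\ep|+\lambda(0)$. I treat $\ep=\tfrac12$; the Ramond case is parallel, the only new feature being the reduction $G_0^2=L_0-\tfrac1{24}C$ used during reordering. Since $E_n\in\mathfrak{p}_\ep$ for $n>1$, Lemma \ref{deri} gives
\[ E_n\bigl(L_{-\lambda}G_{\ep-\mu}w\bigr)=\psi(E_n)\,L_{-\lambda}G_{\ep-\mu}w+[E_n,\,L_{-\lambda}G_{\ep-\mu}]\,w. \]
The first summand is a scalar multiple of the same basis vector, so it obeys the bound of (i) with equality, and by Lemma \ref{lie-hom-1} it vanishes as soon as $n\ge3$ and for every $E_n=G_{n+1-\ep}$; hence only the commutator term carries content.

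Expanding $[E_n,L_{-\lambda}G_{\ep-\mu}]$ by the (super-)Leibniz rule replaces, in each term, exactly one generator $X$ of the monomial by the single generator $[E_n,X]$ (or by a central scalar), via Definition \ref{Defi-SVA}: $[L_n,L_{-i}]=(n+i)L_{n-i}$ up to a central term, $[L_n,G_{\ep-j}]$ and $[G_{n+\frac12},L_{-i}]$ are single $G$'s, and $[G_{n+\frac12},G_{\ep-j}]$ is a single $L$. The structural input I rely on is that $U(\SVir_\ep)$ is graded by $\mathrm{ad}\,L_0$, that a PBW monomial $L_{-\lambda'}G_{\ep-\mu'}$ lies in the homogeneous component of degree exactly $|\lambda'|+|\mu'-\ep|$, and that $E_n$ lowers this degree by $n$ (resp. $n+1-\ep$). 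Normal-ordering the commutator into PBW form $U(\text{complement})\cdot U(\mathfrak{p}_\ep\oplus\cc C)$ and applying to $w$, only the terms whose right factor is a product of $L_1,L_2,C$ survive, since every other generator of $\mathfrak{p}_\ep\oplus\cc C$ annihilates $w$ by Lemma \ref{lie-hom-1}. Writing that factor as $L_1^aL_2^bC^e$ and the surviving basis vector as $L_{-\lambda'}G_{\ep-\mu'}w$, degree-homogeneity forces
\[ |\lambda'|+|\mu'-\ep|=d-n+a+2b, \]
and the analogue with $n+1-\ep$ in the $G$ case.

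It then remains to prove $a+2b+\lambda'(0)\le\lambda(0)+n$ (resp. $+\,n+1-\ep$), which is precisely $D(\lambda',\mu')\le D(\lambda,\mu)$. I expect this to be the main obstacle, because the $L_0$-degree is blind to $L_0$ itself: factors $L_0$ are produced exactly by the resonant brackets $[L_n,L_{-n}]\propto L_0$ and by commuting a freshly produced raising generator past an existing $L_0$, while the scalars $\psi(L_1),\psi(L_2)$ are produced by the same resonances one step later. I would organize the bookkeeping as an induction on the length $\ell(\lambda)+\ell(\mu)$: peel off the leftmost generator $Z$ and write $E_n(Z\,b')=[E_n,Z]\,b'\pm Z\,(E_nb')$. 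The second term is handled by the inductive hypothesis followed by a reordering sublemma, namely that left multiplication by a single complement generator $Z$ raises $D$ by at most the weight of $Z$ (i.e. $i$ for $L_{-i}$ with $i\ge1$, $1$ for $L_0$, and $j-\ep$ for $G_{\ep-j}$), the only delicate point being the collision $G_\ep^2=L_1$; the first term feeds a generator of strictly higher mode into a shorter monomial, to which the same analysis applies. Assembling these with the degree identity above yields the filtration bound (i).

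For (ii) I would argue by mode-counting. A single bracket of $E_n$ with a factor $L_{-i}$ produces a generator of mode $n-i$; to reduce this to one of the modes $1,2$ (for an $L$) or below $2-\ep$ (for a $G$) — the only modes at which $\psi$ can be nonzero — one must absorb mode $n-i-2$ from the remaining factors, whose total available mode is $d-i$. When $n>d+2$ this is infeasible, so in every normal-ordered term some $L_m$ with $m\ge3$ or some $G_r$ with $r\ge2-\ep$ must act on $w$, giving $0$ by Lemma \ref{lie-hom-1}; the same bound rules out the multi-scalar configurations $L_1^aL_2^b$ through the length induction. Together with $\psi(E_n)=0$ (immediate once $n\ge3$, and verified directly in the finitely many low-degree vectors with $d<1$), this forces $E_n\bigl(L_{-\lambda}G_{\ep-\mu}w\bigr)=0$, and the case $\ep=0$ follows identically using $G_0^2=L_0-\tfrac1{24}C$.
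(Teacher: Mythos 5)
Your treatment of part (i) is essentially the paper's own route fleshed out: the paper's proof is literally one line (``by direct calculation as Lemma \ref{lemm-weight-1}''), and your Leibniz expansion, the $\mathrm{ad}\,L_0$-degree identity $|\lambda'|+|\mu'-\ep|=d-n+a+2b$, and the reduction to $a+2b+\lambda'(0)\le n+\lambda(0)$ are the right bookkeeping. The collision $G_\ep^2=L_1$ you flag is genuinely the delicate point, but it is absorbable: in your leftmost-peeling scheme $G_\ep$ can be the leading factor only of the trivial monomial $G_\ep$ itself, and when $G_\ep$ is instead \emph{created} by a bracket $[E_n,X]$, the unit by which the collision overshoots the naive weight count is covered by the slack $n\ge2$. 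So (i) is sound modulo the deferred assembly.

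The genuine gap is in part (ii), precisely at the two places you wave through: the claim that $\psi(E_n)=0$ is ``verified directly in the finitely many low-degree vectors with $d<1$,'' and the claim that for $n>d+2$ every normal-ordered term must end in a generator killing $w_\psi$. Both fail for $\ep=\tfrac12$, because $|\mu-\ep|$ can be negative: the factor $G_{1/2}=G_{\ep-0}$ contributes $-\tfrac12$, leaving a half-unit of mode your count does not see. Concretely, for $\lambda=\underline{0}$, $\mu=(0)$, $n=2$ one has $n>|\lambda|+|\mu-\ep|+2=\tfrac32$, yet $L_2(G_{1/2}w_\psi)=\psi(L_2)G_{1/2}w_\psi\ne0$ whenever $\psi(L_2)\ne0$; and the scalar term is not the only culprit, since for $\lambda=(1)$, $\mu=(0)$, $n=3>\tfrac52$ the commutator itself survives: $L_3(L_{-1}G_{1/2}w_\psi)=4\psi(L_2)G_{1/2}w_\psi$, coming from $[L_3,L_{-1}]=4L_2$ — a configuration your mode-count declares infeasible. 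In other words, statement (ii) as printed is false at this boundary in the Neveu--Schwarz case when $\psi(L_2)\neq0$ (it is fine for $\ep=0$, where $|\mu-\ep|\ge0$), so no proof can close; the correct bound is $n>|\lambda|+|\mu-\ep|+2+\ep$, or more simply $n>|\lambda|+|\mu|+2$, under which your mode-counting does go through (a surviving term forces $d'=d-m+s\ge-\ep$ with $s\le2$, hence $m\le d+2+\ep$). Your write-up obscures this by asserting a low-degree verification that was not actually carried out. Note the downstream use (Lemma \ref{lemma-finite}) is unharmed, since part (i) alone already gives local finiteness of $U(\mathfrak{p}_\ep)v$.
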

\begin{proof} By direct calculation as Lemma \ref{lemm-weight-1}.
\end{proof}

\begin{lem}\label{lemma-finite}
Let $V$ be a Whittaker module, and $v \in V$.
Regarding $V$ as a $\mathfrak{p}_{\ep}$-module, $U(\mathfrak{p}_{\ep})v$ is a finite-dimensional submodule of $V$.
\end{lem}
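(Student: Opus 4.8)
The plan is to reduce to the universal Whittaker module and then peel off the one problematic odd generator by a PBW argument. First I would observe that a Whittaker module $V$ of type $(\psi,c)$ is a quotient of $W_\ep(\psi,c)$: by the universal property of induction (Definition \ref{defi-w}), the assignment $w_\psi\mapsto w$ extends to a surjection $W_\ep(\psi,c)\twoheadrightarrow V$. Since the image of a finite-dimensional $\mathfrak{p}_\ep$-submodule is again finite-dimensional, it suffices to treat $V=W_\ep(\psi,c)$. Writing $v=\sum p_{\lambda,\mu}L_{-\lambda}G_{\ep-\mu}w_\psi$ and putting $d(\lambda,\mu):=|\lambda|+|\mu-\ep|+\lambda(0)$, set $D=\max\{d(\lambda,\mu)\mid p_{\lambda,\mu}\neq0\}$. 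The set $\{(\lambda,\mu)\in\p\times\q\mid d(\lambda,\mu)\le D\}$ is finite, so $\mathcal{F}_D:=\mathrm{span}_\cc\{L_{-\lambda}G_{\ep-\mu}w_\psi\mid d(\lambda,\mu)\le D\}$ is a finite-dimensional subspace containing $v$, and this is the space in which I will trap $U(\mathfrak{p}_\ep)v$.

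Next I would isolate $G_{2-\ep}$. Set $\mathfrak{q}=\bigoplus_{n\ge2}\cc L_n\oplus\bigoplus_{r\ge 3-\ep}\cc G_r$ and $\mathfrak{q}'=\cc L_1\oplus\mathfrak{q}$. A direct bracket check shows both are subalgebras of $\mathfrak{p}_\ep$ and that $\mathfrak{p}_\ep=\mathfrak{q}'\oplus\cc G_{2-\ep}$ as vector spaces. Because $G_{2-\ep}$ is odd, the PBW theorem for Lie superalgebras (taking $G_{2-\ep}$ as the least basis element) gives $U(\mathfrak{p}_\ep)=U(\mathfrak{q}')\oplus G_{2-\ep}\,U(\mathfrak{q}')$; the fact that $G_{2-\ep}$ appears at most once is consistent with $G_{2-\ep}^2=\tfrac12[G_{2-\ep},G_{2-\ep}]=L_{4-2\ep}\in U(\mathfrak{q}')$. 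Hence $U(\mathfrak{p}_\ep)v=U(\mathfrak{q}')v+G_{2-\ep}\,U(\mathfrak{q}')v$, and since $G_{2-\ep}$ is a single operator it remains only to show that $U(\mathfrak{q}')v$ is finite-dimensional.

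For this I would prove $\mathcal{F}_D$ is $\mathfrak{q}'$-stable, which forces $U(\mathfrak{q}')v\subseteq\mathcal{F}_D$. The generators $L_n$ with $n\ge2$ and $G_r=G_{n+1-\ep}$ with $n\ge2$ (i.e. $r\ge 3-\ep$) are precisely the operators $E_n$ with $n>1$, so Lemma \ref{lem:predotActionFinite}(i) says they do not raise $d(\lambda,\mu)$ and therefore map $\mathcal{F}_D$ into itself. The only remaining generator is $L_1$, which is not covered by Lemma \ref{lem:predotActionFinite}; here I would run the same commutator expansion and check that $d(\lambda,\mu)$ is still non-increasing. The contributions that do not strictly lower $d$ are $[L_1,L_{-1}]=2L_0$, which trades one unit of $|\lambda|$ for one unit of $\lambda(0)$ and so leaves $d$ unchanged, and the terminal scalar $L_1w_\psi=\psi(L_1)w_\psi$ (using Lemma \ref{lie-hom-1}), which fixes $(\lambda,\mu)$; every other bracket strictly lowers $d$. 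Thus $L_1$ preserves $\mathcal{F}_D$ as well, $U(\mathfrak{q}')v\subseteq\mathcal{F}_D$ is finite-dimensional, and therefore so is $U(\mathfrak{p}_\ep)v=U(\mathfrak{q}')v+G_{2-\ep}U(\mathfrak{q}')v$.

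The hard part is exactly that the two lowest generators $L_1$ and $G_{2-\ep}$ of $\mathfrak{p}_\ep$ fall outside the range $n>1$ of Lemma \ref{lem:predotActionFinite}, and $G_{2-\ep}$ really can raise $d$ (for example $G_{2-\ep}(G_\ep w_\psi)=2\psi(L_2)w_\psi$ when $\ep=\tfrac12$, passing from degree $-\tfrac12$ to degree $0$), so one cannot simply confine $U(\mathfrak{p}_\ep)v$ to $\mathcal{F}_D$ outright. The resolution is the PBW splitting, which limits the influence of $G_{2-\ep}$ to a single application, combined with the observation that the auxiliary term $\lambda(0)$ in $d$ is engineered precisely so that $L_1$—which may be applied arbitrarily many times—still preserves the filtration. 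As usual the Ramond case $\ep=0$ is handled by the same argument, with $G_{2-\ep}=G_2$ and $G_2^2=L_4\in U(\mathfrak{q}')$.
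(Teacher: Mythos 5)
Your proof is correct, and it is best described as the paper's argument actually carried out: the paper's entire proof of this lemma is the sentence ``It follows from Lemma \ref{lem:predotActionFinite}.'' Taken literally, that lemma controls only the operators $E_n$ with $n>1$, i.e.\ $L_n$ for $n\ge 2$ and $G_r$ for $r\ge 3-\ep$, so by itself it says nothing about the two remaining generators $L_1$ and $G_{2-\ep}$ of $\mathfrak{p}_\ep$; nor does the paper record the reduction from an arbitrary Whittaker module $V$ to $W_\ep(\psi,c)$, which is needed because Lemma \ref{lem:predotActionFinite} is a statement about PBW basis vectors of the universal module. You supply exactly these missing steps: the surjection $W_\ep(\psi,c)\twoheadrightarrow V$ (up to the parity change $\Pi$ when the parities of the Whittaker vectors differ); the check that $L_1$ does not increase $d(\lambda,\mu)=|\lambda|+|\mu-\ep|+\lambda(0)$, which is precisely what the $\lambda'(0)$ term in Lemma \ref{lem:predotActionFinite}(i) is engineered for; and the PBW peeling $U(\mathfrak{p}_\ep)=U(\mathfrak{q}')\oplus G_{2-\ep}\,U(\mathfrak{q}')$. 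The last step is the genuinely valuable one: as your example $G_{3/2}G_{1/2}w_\psi=2\psi(L_2)w_\psi$ shows, $G_{2-\ep}$ can strictly raise $d$ (for $\ep=\frac12$), so it cannot be absorbed into the filtration, while its oddness guarantees it occurs at most once in a PBW monomial, and a single application of one operator cannot destroy finite-dimensionality. So where the paper's one-line citation leaves a real gap, your write-up closes it with the intended mechanism.

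Two minor caveats. First, in the $L_1$ analysis your inventory of non-lowering contributions is slightly incomplete: for $\ep=\frac12$, if $\mu$ contains both parts $0$ and $1$, commuting $L_1$ past $G_{-\frac12}$ creates $G_{\frac12}G_{\frac12}=\frac12[G_{\frac12},G_{\frac12}]=L_1$, and in this branch $d$ is unchanged (the two discarded parts contribute $(1-\frac12)+(0-\frac12)=0$) before the regenerated $L_1$ recurses on a strictly shorter monomial; the induction still gives that $L_1$ is non-increasing on $d$, which is all you use, but the claim ``every other bracket strictly lowers $d$'' is not literally true. Second, your closing sentence on the Ramond case inherits a defect of the paper itself: for $\ep=0$ the basis stated in the first lemma of Section 4 omits monomials containing $G_1$ (the PBW complement of $\mathfrak{p}_0\oplus\cc C$ in $\SVir_0$ contains $G_1$; compare the degenerate Whittaker vector $G_1w_\psi$ of Remark \ref{rem45}), so the correct uniform basis is $\{L_{-\lambda}G_{1-\ep-\mu}w_\psi\}$ and the degree function needs the corresponding adjustment before ``the same argument'' goes through there. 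This does not affect the Neveu--Schwarz case you wrote out in detail, and your PBW reduction of $G_{2-\ep}=G_2$ is exactly what the Ramond case requires as well.
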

\begin{proof} It follows from Lemma
\ref{lem:predotActionFinite}.
\end{proof}

\begin{lem} \label{submodule}
Let $U$ be  a submodule of $W_{\ep}(\psi,c)$.  Then there is a nonzero Whittaker vector $u\in U$.
\end{lem}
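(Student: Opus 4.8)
The plan is to locate the Whittaker vector inside a suitable finite-dimensional piece of $U$ and then to show that its $\mathfrak{p}_\ep$-character is forced to be $\psi$. Concretely, I would start from any nonzero $v\in U$. Since $U$ is a $\SVir_\ep$-submodule it is in particular stable under $\mathfrak{p}_\ep$, so $U(\mathfrak{p}_\ep)v\subseteq U$, and by Lemma~\ref{lemma-finite} this is a finite-dimensional $\mathfrak{p}_\ep$-submodule $M$ of $U$. A nonzero finite-dimensional module contains a simple graded submodule (take one of minimal positive dimension), and by Proposition~\ref{irr-finite-1} every finite-dimensional simple $\mathfrak{p}_\ep$-module is one-dimensional. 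Hence $M$ contains a nonzero homogeneous vector $u'$ with $x u'=\chi(x)u'$ for all $x\in\mathfrak{p}_\ep$, where $\chi:\mathfrak{p}_\ep\to\cc$ is a Lie superalgebra homomorphism, and $Cu'=cu'$ automatically since $C$ is central and acts on all of $W_\ep(\psi,c)$ by the scalar $c$. It then suffices to prove $\chi=\psi$, for then $u'$ is precisely a nonzero Whittaker vector of type $(\psi,c)$ lying in $U$.

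The equality $\chi=\psi$ reduces to the two scalar identities $\chi(L_1)=\psi(L_1)$ and $\chi(L_2)=\psi(L_2)$: both $\chi$ and $\psi$ are even homomorphisms into the purely even abelian Lie superalgebra $\cc$, hence they vanish on the odd part of $\mathfrak{p}_\ep$, and they vanish on $L_n$ for $n\ge 3$ by Lemma~\ref{lie-hom-1}. To obtain these two identities I would show that each of $L_1-\psi(L_1)$ and $L_2-\psi(L_2)$ acts \emph{nilpotently} on the finite-dimensional space $M$; since $u'$ is an eigenvector of $L_i-\psi(L_i)$ with eigenvalue $\chi(L_i)-\psi(L_i)$, and a nilpotent operator has only the eigenvalue $0$, this forces $\chi(L_i)=\psi(L_i)$. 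For $L_2$ the estimate of Lemma~\ref{lem:predotActionFinite} shows that $L_2$ preserves the filtration by the refined degree $d(\lambda,\mu)=|\lambda|+|\mu-\ep|+\lambda(0)$ and acts on the top graded layer by the scalar $\psi(L_2)$, so $L_2-\psi(L_2)$ strictly lowers $d$ and is therefore nilpotent on $M$.

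The main obstacle is the generator $L_1$, which is exactly the boundary case left uncovered by Lemma~\ref{lem:predotActionFinite}: it neither lowers $d$ strictly nor has an obviously nilpotent square. The remedy is precisely the correction term $\lambda(0)$ in $d$. One checks that $L_1-\psi(L_1)$ still preserves the filtration by $d$, and that on the associated graded its only surviving contribution comes from the relation $[L_1,L_{-1}]=2L_0$; that is, it replaces a factor $L_{-1}$ by a factor $L_0$, raising $\lambda(0)$ and lowering $\lambda(1)$ while keeping $d$ fixed. This graded operator strictly decreases the number of $L_{-1}$-factors and is therefore nilpotent, whence $L_1-\psi(L_1)$ is nilpotent on $M$ as well. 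I expect this $L_1$-analysis to be the delicate point, and it is the same phenomenon that forces the case distinction $\psi(L_2)\ne 0$ versus $\psi(L_2)=0$ in Proposition~\ref{whittaker-vector}. With $L_1$ and $L_2$ handled, $\chi=\psi$ follows and $u'$ is the desired vector. As an alternative that avoids passing to a simple submodule, one can verify that every homogeneous element of $\mathfrak{p}_\ep$ acts nilpotently on $M$ after subtracting $\psi$ (the odd elements because their squares lie in $\bigoplus_{n\ge 3}\cc L_n$, which is nilpotent on $M$) and then invoke the super version of Engel's theorem to produce the common kernel vector directly.
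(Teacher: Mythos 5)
Your skeleton is exactly the paper's proof, which consists of nothing more than the two citations you make: Lemma~\ref{lemma-finite} gives the finite-dimensional $\mathfrak{p}_\ep$-submodule $U(\mathfrak{p}_\ep)v\subseteq U$, and Proposition~\ref{irr-finite-1} gives a one-dimensional submodule of it, i.e.\ a common eigenvector $u'$ with some character $\chi$. The remaining point, $\chi=\psi$, is left implicit in the paper; it is precisely the first paragraph of the proof of Proposition~\ref{whittaker-vector}, where any $\mathfrak{p}_\ep$-eigenvector in $W_\ep(\psi,c)$ is shown to have character $\psi$ by comparing the top-degree coefficients of $Ew'$ and $\psi'(E)w'$ for $E\in\{L_1,L_2\}$. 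You are right that this step needs saying, and replacing it by a nilpotency/Engel argument is legitimate in principle.

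However, the two concrete claims on which your nilpotency argument rests are false, and they fail exactly in the main case $\psi(L_2)\neq0$. Take $\ep=\tfrac12$ and $v=G_{-1/2}G_{1/2}w_\psi$, which has refined degree $d=0$ (the two $G$-factors contribute $\pm\tfrac12$). Since $[L_2,G_{-1/2}]=\tfrac32 G_{3/2}$, $[L_2,G_{1/2}]=\tfrac12G_{5/2}$, and $G_{3/2}G_{1/2}w_\psi=[G_{3/2},G_{1/2}]w_\psi=2\psi(L_2)w_\psi$, one gets $(L_2-\psi(L_2))v=3\psi(L_2)w_\psi$, a nonzero vector of the \emph{same} degree $d=0$: thus $L_2-\psi(L_2)$ does not strictly lower $d$, and $L_2$ does not act by the scalar $\psi(L_2)$ on the top graded layer. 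Likewise $(L_1-\psi(L_1))v=\psi(L_1)w_\psi$, via $[L_1,G_{-1/2}]=G_{1/2}$ and $G_{1/2}^2=L_1$, so $[L_1,L_{-1}]=2L_0$ is \emph{not} the only surviving graded contribution of $L_1$; in the Ramond case $[L_1,G_{-1}]G_0$ produces $\tfrac32\bigl(L_0-\tfrac{c}{24}\bigr)$ with the same degree-preserving effect, and your bookkeeping must additionally accommodate $[L_1,G_0]=\tfrac12G_1$ with $G_1\notin\mathfrak{p}_0$, so that $G_1w_\psi$ is a basis vector rather than a multiple of $w_\psi$. These $G$-pair contractions are exactly the terms the paper isolates in Lemma~\ref{lemm-weight-1}(iii); they are the engine of its Whittaker-vector classification, not a negligible correction. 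Your conclusion is salvageable: every surviving graded term either trades an $L_{-1}$ for an $L_0$ or contracts a pair of $G$-factors, hence strictly decreases the pair (number of $G$-factors, $\lambda(1)$) in lexicographic order, so $L_i-\psi(L_i)$ is still nilpotent on each filtration level and $\chi=\psi$ follows. But as written the decisive step fails, and the shorter repair is simply to invoke the $\psi'\neq\psi$ comparison from Proposition~\ref{whittaker-vector}, which is what the paper's one-line proof tacitly relies on.
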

\begin{proof} It follows from Lemma \ref{lemma-finite} and Proposition \ref{irr-finite-1}.\end{proof}

Our mains result can be stated as follows:

\begin{thm}\label{simple}  Suppose that $\psi:\mathfrak{p}_\ep\to\cc$ is a nontrivial Lie superalgebra homomorphism and $c\in\cc$. Then
\begin{itemize}
\item[(i)]$W_{\frac{1}{2}}(\psi, c)$ is simple.
\item[(ii)]$W_{0}(\psi, c)$ is simple if and only if $\psi(L_2)\neq0$.
\end{itemize}
\end{thm}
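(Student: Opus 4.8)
The plan is to combine two results established above: Lemma~\ref{submodule}, guaranteeing that every nonzero submodule of $W_\ep(\psi,c)$ contains a nonzero Whittaker vector, and Proposition~\ref{whittaker-vector}, which classifies all Whittaker vectors. Since $W_\ep(\psi,c)=U(\SVir_\ep)w_\psi$ is cyclic on $w_\psi$, a submodule coincides with the whole module as soon as it contains $w_\psi$. Hence for the simplicity assertions I will show that every nonzero submodule must contain $w_\psi$, and for the failure of simplicity I will exhibit an explicit proper nonzero submodule. The computation driving both is the super-square $G_{1-\ep}^2w_\psi=\frac12[G_{1-\ep},G_{1-\ep}]w_\psi=L_{2-2\ep}w_\psi=\psi(L_{2-2\ep})w_\psi$, i.e.\ $G_{1/2}^2w_\psi=\psi(L_1)w_\psi$ when $\ep=\frac12$ and $G_1^2w_\psi=\psi(L_2)w_\psi$ when $\ep=0$.

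First I reduce to homogeneous Whittaker vectors. By Lemma~\ref{lie-hom-1} the map $\psi$ vanishes on the odd part $\bigoplus_{n\ge2}\cc G_{n-\ep}$ of $\mathfrak{p}_\ep$, so for a Whittaker vector $w'=w'_{\bar0}+w'_{\bar1}$ and any homogeneous $x\in\mathfrak{p}_\ep$, comparing the two parities in $xw'=\psi(x)w'$ shows that $w'_{\bar0}$ and $w'_{\bar1}$ are themselves Whittaker vectors. Since a submodule $U$ is $\z_2$-graded, a nonzero homogeneous Whittaker vector lies in $U$, and by Proposition~\ref{whittaker-vector} it is a scalar multiple either of $w_\psi$ (even) or of $G_{1-\ep}w_\psi$ (odd).

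For (i): if $\psi(L_2)\ne0$ then Proposition~\ref{whittaker-vector}(i) leaves only multiples of $w_\psi$, so $w_\psi\in U$; if $\psi(L_2)=0$ then $\psi(L_1)\ne0$ since $\psi$ is nontrivial, and a homogeneous Whittaker vector in $U$ is a multiple of $w_\psi$ or of $G_{1/2}w_\psi$, where in the latter case $w_\psi=\psi(L_1)^{-1}G_{1/2}(G_{1/2}w_\psi)\in U$. Thus $U=W_{1/2}(\psi,c)$ in all cases. The simplicity half of (ii), where $\psi(L_2)\ne0$, is identical with $G_1$ in place of $G_{1/2}$: a homogeneous Whittaker vector in $U$ is a multiple of $w_\psi$ or of $G_1w_\psi$, and $w_\psi=\psi(L_2)^{-1}G_1(G_1w_\psi)\in U$ in the second case, so $W_0(\psi,c)$ is simple.

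For the converse of (ii), assume $\psi(L_2)=0$. By Proposition~\ref{whittaker-vector}(iii), $G_1w_\psi$ is a Whittaker vector of type $(\psi,c)$, and it is nonzero by the PBW theorem, since $G_1$ together with $\SVir_0^-\oplus\cc L_0\oplus\cc G_0$ spans a complement of $\mathfrak{p}_0\oplus\cc C$ in $\SVir_0$. By the universal property of the induced module there is an endomorphism $\phi$ of $W_0(\psi,c)$ with $\phi(w_\psi)=G_1w_\psi$, whence $\phi^2(w_\psi)=\pm G_1^2w_\psi=\pm\psi(L_2)w_\psi=0$; thus $\phi^2=0$ while $\phi\ne0$. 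Since $\phi^2=0$ gives $\mathrm{im}\,\phi\subseteq\ker\phi$, the image $\mathrm{im}\,\phi=U(\SVir_0)(G_1w_\psi)$ is a nonzero proper submodule, so $W_0(\psi,c)$ is not simple. The main obstacle is precisely this last step: a direct proof that $w_\psi\notin U(\SVir_0)(G_1w_\psi)$ would require a delicate filtration estimate, whereas the nilpotent-endomorphism argument reduces everything to the identity $G_1^2w_\psi=\psi(L_2)w_\psi$.
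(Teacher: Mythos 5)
Your proof is correct, and for part (i) and the sufficiency half of (ii) it follows essentially the paper's own route: every nonzero submodule contains a nonzero Whittaker vector (Lemma~\ref{submodule}), Proposition~\ref{whittaker-vector} pins such a vector down to a multiple of $w_\psi$ or of $G_{1-\ep}w_\psi$, and the super-square identity $G_{1-\ep}^2w_\psi=\psi(L_{2-2\ep})w_\psi$ pulls $w_\psi$ back into the submodule --- the paper records this only as $G_{\frac12}G_{\frac12}w_\psi=\psi(L_1)w_\psi$ for $\ep=\tfrac12$ and leaves the Ramond case implicit, so your explicit parity decomposition of an arbitrary Whittaker vector and your spelled-out $G_1^2w_\psi=\psi(L_2)w_\psi$ step are small gap-fills rather than deviations. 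Where you genuinely diverge is the necessity half of (ii): the paper simply asserts that $\langle G_1w_\psi\rangle$ is proper ``since $w_\psi\notin\langle G_1w_\psi\rangle$'', with no supporting argument, which is exactly the delicate point you flag. Your nilpotent-endomorphism argument closes this honestly: since $G_1w_\psi$ is a Whittaker vector of type $(\psi,c)$, the left annihilator of $w_\psi$ (the left ideal generated by $x-\psi(x)$, $x\in\mathfrak{p}_0$, and $C-c$) kills $G_1w_\psi$, so $uw_\psi\mapsto uG_1w_\psi$ is well defined; it is nonzero by PBW --- and your description of the complement is the careful one, since the basis stated in the paper's Lemma~4.1 as printed omits $G_1$ when $\ep=0$ --- and its square is $\pm\psi(L_2)\cdot\mathrm{id}=0$, whence $0\neq\mathrm{im}\,\phi\subseteq\ker\phi$ and $w_\psi\notin\mathrm{im}\,\phi$. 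Two minor refinements: $\phi$ reverses parity, so strictly it is an odd morphism (equivalently a map $W_0(\psi,c)\to\Pi\,W_0(\psi,c)$), though $\mathrm{im}\,\phi=U(\SVir_0)G_1w_\psi$ is a genuine graded submodule in any case; and if you define $\phi(uw_\psi)=uG_1w_\psi$ with no signs at all, well-definedness still holds because $\psi$ vanishes on the odd part of $\mathfrak{p}_0$, and then $\phi^2=0$ holds on the nose, eliminating your $\pm$. What your route buys is a complete, self-contained proof of properness from the single relation $G_1^2=L_2$; fleshing out the paper's bare assertion directly would instead require maxdeg-filtration estimates in the spirit of Lemma~\ref{lemm-weight-1}, which the paper only deploys later (Remark~\ref{rem45}) when identifying the simple quotient $W_0(\psi,c)/\langle G_1w_\psi\rangle$.
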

\begin{proof}

 For $\ep=\frac12$, if $\psi(L_2)=0$ and $\psi(L_1)\ne 0$, then $G_{\frac12}w_\psi$ is also a Whittaker vector. However the  Whittaker module generated by $G_{\frac12}w_\psi$ is same as $W_{\frac{1}{2}}(\psi,c)$ since $G_{\frac12}G_{\frac12}w_\psi=\psi(L_1)w_{\psi}$.
So (i) and the sufficient condition (ii) follow from Proposition \ref{whittaker-vector} and Lemma \ref{submodule}.

We only need to prove the necessary condition for (ii). If $\psi(L_2)=0$ and $\psi(L_1)\ne 0$, then $G_{1}w_\psi$ is also a Whittaker vector of $W_{0}(\psi, c)$. In this case the Whittaker module generated by $G_{1}w_\psi$ is a proper submodule of $W_0(\psi,c)$ since $w_\psi\not\in \langle G_1w_\psi\rangle$.
\end{proof}

\begin{rem}\label{rem45}
 For $\ep=0$, if $\psi(L_2)=0$ and $\psi(L_1)\ne 0$, the vector $G_1w_\psi$ is called the degenerate Whittaker vector \cite{DLM}. Moreover $W_0(\psi,c)/\langle G_1w \rangle$ is a Whittaker module of $(\phi,c)$, where  $\phi:\SVir_{0}^+ \to \cc$ such that $\phi(L_2)=0$ and $\phi(L_1)\ne 0$ (in this case $\phi(G_1)=0$ since $\phi(L_2)=\phi(G_1)^2$). Similar to the proof of Proposition \ref{whittaker-vector}, we can show that $W_0(\psi,c)/\langle G_1w \rangle$ is simple. Then in this case $L_0(\psi, c)=W_0(\psi,c)/\langle G_1w \rangle$.
\end{rem}

\begin{cor} \label{gen-sim}
Assume  $\psi:\mathfrak{p}_\ep\to\cc$ is a nontrivial Lie superalgebra homomorphism and $c\in\cc$. Let $M_\ep$ be
a simple Whittaker module of type $(\psi,c)$ for $\SVir_\ep$. Then $M_\ep\cong L_\ep(\psi,c)$ or $\Pi\, L_\ep(\psi,c)$.
\end{cor}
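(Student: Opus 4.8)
The plan is to classify all simple Whittaker modules of type $(\psi,c)$ by showing that any such module is a quotient of the universal object $W_\ep(\psi,c)$, and then invoking the simplicity results already established. First I would take $M_\ep$ to be a simple Whittaker module generated by a homogeneous Whittaker vector $w$ satisfying $xw=\psi(x)w$ for all $x\in\mathfrak{p}_\ep$ and $Cw=cw$. By the universal property of the induced module, there is a canonical surjective $\SVir_\ep$-module homomorphism $\pi:W_\ep(\psi,c)\to M_\ep$ sending $w_\psi\mapsto w$, since $\cc w_\psi$ generates $W_\ep(\psi,c)$ as an induced module and $w$ satisfies the same defining relations over $\mathfrak{p}_\ep\oplus\cc C$. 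Thus $M_\ep$ is isomorphic, up to parity, to a nonzero quotient of $W_\ep(\psi,c)$, hence to $W_\ep(\psi,c)/K$ for some proper submodule $K$.

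Next I would split into the two cases governed by Theorem \ref{simple}. In the Neveu--Schwarz case $\ep=\frac12$, Theorem \ref{simple}(i) guarantees that $W_{\frac12}(\psi,c)$ is already simple, so the only proper submodule is $0$ and $M_{\frac12}\cong W_{\frac12}(\psi,c)=L_{\frac12}(\psi,c)$ up to parity. In the Ramond case $\ep=0$, if $\psi(L_2)\neq0$ then $W_0(\psi,c)$ is simple by Theorem \ref{simple}(ii) and the same conclusion follows. The remaining subcase is $\psi(L_2)=0$, $\psi(L_1)\neq0$: here $W_0(\psi,c)$ is not simple, but by Remark \ref{rem45} its unique maximal submodule is $\langle G_1 w_\psi\rangle$ and the irreducible quotient $L_0(\psi,c)=W_0(\psi,c)/\langle G_1 w_\psi\rangle$ is simple. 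Since $M_\ep$ is a simple quotient of $W_\ep(\psi,c)$, the kernel $K$ must be the unique maximal submodule, giving $M_\ep\cong L_\ep(\psi,c)$ up to parity in every case.

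The main point requiring care is the uniqueness of the maximal submodule, which is precisely what makes $L_\ep(\psi,c)$ well-defined. This was already asserted in the excerpt immediately after the definition of $W_\ep(\psi,c)$, and it follows from Lemma \ref{submodule}: every nonzero submodule $U$ of $W_\ep(\psi,c)$ contains a nonzero Whittaker vector, and by Proposition \ref{whittaker-vector} the Whittaker vectors are confined to an explicit finite-dimensional space, so one can identify the sum of all proper submodules and check it is proper. I expect the only genuine obstacle to be bookkeeping about the parity-change functor $\Pi$: since the Whittaker vector $w$ generating $M_\ep$ may be even or odd, matching the $\z_2$-grading of $M_\ep$ with that of $L_\ep(\psi,c)$ may force an application of $\Pi$, which accounts for the two alternatives in the statement. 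Everything else is a direct appeal to the universal property together with the simplicity criterion of Theorem \ref{simple} and Remark \ref{rem45}.
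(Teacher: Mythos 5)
Your proposal is correct and takes essentially the same route as the paper's own proof: a surjection $W_\ep(\psi,c)\to M_\ep$ from the universal property, identification of the simple quotient with $L_\ep(\psi,c)$ via Theorem \ref{simple} (and Remark \ref{rem45} in the degenerate Ramond case), and the parity of the generating Whittaker vector accounting for the alternative $\Pi\, L_\ep(\psi,c)$. Your extra justification of the uniqueness of the maximal submodule through Lemma \ref{submodule} and Proposition \ref{whittaker-vector} merely fills in a detail the paper asserts without proof, so no genuinely different ideas are involved.
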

\begin{proof} Let $w\in M_\ep$ be a Whittaker vector
corresponding to $\psi$. If $|w|=|w_\psi|$ (the $\z_2$ degree's), then by the
universal property of $W_\ep({\psi}, c)$, there exists a module homomorphism
$\varphi : W_\ep({\psi}, c) \to M_\ep$ with $uw_\psi\mapsto uw$.  This map is
surjective since $w$ generates $M_\ep$. Then $M_\ep\cong L_\ep(\psi,c)$ since $M_\ep$ is simple. Similarly, if $|w|\ne|w_\psi|$, then $M_\ep\cong \Pi\, L_\ep(\psi,c)$. As a corollary of Theorem \ref{simple}, the proof is complete.
\end{proof}

\begin{cor}
Let $V$ be finite-dimensional module over $\SVir_\ep^+$ with $C$ acts as a scalar $c\in \cc$. Then  $M_\ep(V,c)$  is a simple module for $\SVir_\ep$  if and only if $V$ is simple module for $\SVir_\ep^+$.
\end{cor}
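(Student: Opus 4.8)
The plan is to prove the two implications separately. The ``only if'' part (namely $M_\ep(V,c)$ simple $\Rightarrow V$ simple) is the soft one, and I would establish it by contraposition; the ``if'' part rests on the classification in Proposition~\ref{irr-finite} together with Theorem~\ref{simple}.

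For the contrapositive of the forward direction, suppose $V$ is not simple and choose a proper nonzero $\z_2$-graded $\SVir_\ep^+$-submodule $V'\subseteq V$ (on which $C$ still acts by $c$). Writing $F=U(\SVir_\ep^-\oplus\cc L_0\oplus\cc\delta_{\ep,0}G_0)$, the PBW theorem and the triangular decomposition make $U(\SVir_\ep)$ free as a right $U(\SVir_\ep^+\oplus\cc C)$-module, so $\Ind_{\SVir_\ep^+\oplus\cc C}^{\SVir_\ep}(-)$ is exact. Applying it to $V'\hookrightarrow V$ gives an injection $M_\ep(V',c)\hookrightarrow M_\ep(V,c)$ whose image is $F\otimes V'\subsetneq F\otimes V$; this is a nonzero proper $\SVir_\ep$-submodule, so $M_\ep(V,c)$ is not simple.

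For the ``if'' direction, assume $V$ is simple. By Proposition~\ref{irr-finite}, up to parity $V$ is isomorphic either to $A_\ep(\psi)$ for a nontrivial $\psi$, in the case where $A_\ep(\psi)$ is itself simple, or to a proper simple sub- or quotient-module of $A_\ep(\psi)$. When $A_\ep(\psi)$ is simple I would first observe that the only generator of $\SVir_\ep^+$ outside $\mathfrak{p}_\ep$ is the odd element $G_{1-\ep}$, and that $G_{1-\ep}^2$ acts as the scalar $\psi(L_{2-2\ep})$ on $w_\psi$; hence $\Ind_{\mathfrak{p}_\ep\oplus\cc C}^{\SVir_\ep^+\oplus\cc C}\cc w_\psi=\cc w_\psi\oplus\cc G_{1-\ep}w_\psi\cong A_\ep(\psi)$. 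Transitivity of induction then yields $M_\ep(V,c)\cong W_\ep(\psi,c)$. Since $A_\ep(\psi)$ simple forces $\psi$ nontrivial and, when $\ep=0$, forces $\psi(L_2)\neq0$, Theorem~\ref{simple} shows $W_\ep(\psi,c)$ is simple.

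The remaining case arises only for $\ep=0$ with $\psi(L_2)=0$ and $\psi(L_1)\neq0$, where the simple $V$ is one-dimensional, appearing as the quotient $A_0(\psi)/\cc\,G_1w_\psi$ (or, up to parity, as the submodule $\cc\,G_1w_\psi$). Using exactness of induction once more, $M_0(V,c)\cong W_0(\psi,c)/\langle G_1w_\psi\rangle$, which is exactly the module proved simple in Remark~\ref{rem45}; alternatively, realizing $M_0(V,c)$ as the submodule $\langle G_1w_\psi\rangle\subseteq W_0(\psi,c)$, its simplicity follows because by Proposition~\ref{whittaker-vector}(iii) its only Whittaker vectors are the multiples of $G_1w_\psi$, while by Lemma~\ref{submodule} any nonzero submodule must contain a Whittaker vector and hence all of $\langle G_1w_\psi\rangle$. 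I expect this degenerate $\ep=0$ case to be the main obstacle: there $M_0(V,c)$ is not a full Whittaker module but a proper subquotient of the non-simple $W_0(\psi,c)$, so its simplicity cannot be read off from Theorem~\ref{simple} and must instead be extracted from the explicit classification of Whittaker vectors.
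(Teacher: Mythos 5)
Your proposal is correct, and it is essentially the derivation the paper intends: the paper states this corollary without proof, expecting the reader to assemble it from Proposition \ref{irr-finite}, the corollary at the end of Section 3 identifying $M_\ep(V,c)$ with $W_\ep(\psi,c)$, Theorem \ref{simple}, and Remark \ref{rem45} — which is exactly what you do. Two points of comparison are worth recording. First, your ``only if'' direction (exactness of induction via PBW-freeness of $U(\SVir_\ep)$ over $U(\SVir_\ep^+\oplus\cc C)$) is the standard argument the paper leaves entirely implicit; note only that for $\ep=0$ your complement $\SVir_\ep^-\oplus\cc L_0\oplus\cc G_0$ is not a subalgebra (since $[G_0,G_0]=2L_0-\tfrac{1}{12}C$), so $F$ should be read as the span of the corresponding PBW monomials rather than a universal enveloping algebra — this does not affect freeness or exactness. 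Second, and more substantively, your treatment of the degenerate case $\ep=0$, $\psi(L_2)=0$, $\psi(L_1)\neq 0$ is a genuine refinement: read literally, the Section 3 corollary asserts $M_\ep(V,c)\cong W_\ep(\psi,c)$ for \emph{every} finite-dimensional simple $V$, which fails precisely here, where $V$ is one-dimensional and $M_0(V,c)\cong W_0(\psi,c)/\langle G_1w_\psi\rangle$ is a proper quotient (alternatively the submodule $\langle G_1w_\psi\rangle$, up to parity) of the non-simple $W_0(\psi,c)$. Your identification via exactness, together with simplicity of this quotient from Remark \ref{rem45} (or from Proposition \ref{whittaker-vector}(iii) and Lemma \ref{submodule}, as in your alternative argument), closes the one real gap in the paper's implicit proof, so your correct diagnosis that this case is the crux — and cannot be read off from Theorem \ref{simple} alone — is precisely right.
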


 \centerline{\bf ACKNOWLEDGMENTS}

\vskip15pt We gratefully acknowledge the partial financial support from the NNSF (No.11871249,\\ No.11771142), the ZJNSF(No.LZ14A010001), the Shanghai Natural Science Foundation (No.16ZR1425000) and the Jiangsu Natural Science Foundation(No.BK20171294). Part of this work was done while the authors were visiting the Chern
Institute of Mathematics, Tianjin, China. The authors would like to
thank the institute and Prof. Chengming Bai for their warm hospitality
and support. We also thank the referee for his/her helpful suggestions.

\bigskip

\end{document}